\newtheorem{theorem}{Theorem}[section]
\newtheorem*{theorem*}{Theorem}
\newtheorem{lemma}[theorem]{Lemma}
\newtheorem{corollary}[theorem]{Corollary}
\newtheorem{definition}[theorem]{Definition}
\newtheorem{conjecture}{Conjecture}
\newtheorem{claim}{Claim}[theorem]
\newtheorem{example}[theorem]{Example}
\newtheorem{remark}[theorem]{Remark}
\newenvironment{customthm}[1]
  {\innercustomthm}
  {\endinnercustomthm}
\newcommand{\R}{\mathbb{R}}
\newcommand{\C}{\mathbb{C}}
\renewcommand{\mod}{{\rm mod\,}}
\begin{document}

\title[Real and bi-Lipschitz versions of the Theorem of Nobile]
{Real and bi-Lipschitz versions of the Theorem of Nobile}

\author[J. E. Sampaio]{Jos\'e Edson Sampaio}

\address{Jos\'e Edson Sampaio: Departamento de Matem\'atica, Universidade Federal do Cear\'a,
	      Rua Campus do Pici, s/n, Bloco 914, Pici, 60440-900, 
	      Fortaleza-CE, Brazil. E-mail: {\tt edsonsampaio@mat.ufc.br} 	   
}

\thanks{The author was partially supported by CNPq-Brazil grant 303375/2025-6. This work was supported by the Serrapilheira Institute (grant number Serra -- R-2110-39576).
}

\keywords{Lipschitz regularity, Nobile's theorem, Nash Conjecture}
\subjclass[2010]{14B05; 32S50 }

\begin{abstract}
The renowned Theorem of Nobile, proved by Nobile in 1975, states that a pure dimensional complex analytic set $X$ is analytically smooth if and only if its Nash transformation $\eta: \mathcal{N}(X) \to X$ is an analytic isomorphism. While the Theorem of Nobile was fundamental in complex geometry, it remained an open question for 50 years whether the theorem held for real analytic sets, even more so for cases that demand only $C^{k}$ smoothness. This paper presents a proof for the real version of the Theorem of Nobile, even under $C^{k}$ smoothness conditions. Specifically, we prove that for a pure dimensional real analytic set $X$ the following statements are equivalent:
\begin{enumerate}
 \item [(1)] $X$ is a real analytic (resp. $C^{k+1,1}$) submanifold;
 \item [(2)] the mapping $\eta\colon\mathcal{N}(X)\to X$ is a real analytic (resp. $C^{k,1}$) diffeomorphism;
 \item [(3)] the mapping $\eta\colon\mathcal{N}(X)\to X$ is a $C^{\infty}$ (resp. $C^{k,1}$) diffeomorphism;
 \item [(4)] $X$ is a $C^{\infty}$ (resp. $C^{k+1,1}$) submanifold.
\end{enumerate}
Consequently, we prove the bi-Lipschitz version of the Theorem of Nobile, demonstrating that a complex analytic set $X$ is analytically smooth if and only if its Nash transformation $\eta\colon \mathcal{N}(X)\to X$ is a homeomorphism that is locally bi-Lipschitz. A sharp version of this theorem, which holds in the much more general setting of locally definable sets in an o-minimal structure, is also presented here.
\end{abstract}

\maketitle

\tableofcontents

\section{Introduction}

Given a pure $d$-dimensional $\mathbb{K}$-analytic set $X$ in $\mathbb{K}^n$, $\mathcal{N}(X)$ is the closure in $X\times Gr_{\mathbb{K}}(d,n)$ of the graph of the {\bf Gauss mapping} $\nu\colon X\setminus {\rm Sing}(X)\to Gr_{\mathbb{K}}(d,n)$ given by $\nu(x)=T_xX$, where $\mathbb{K}$ is $\C$ or $\R$ and $Gr_{\mathbb{K}}(d,n)$ is the Grassmannian of $d$-dimensional $\mathbb{K}$-linear subspaces in $\mathbb{K}^n$. The set $\mathcal{N}(X)$ is called the {\bf Nash transformation of $X$} and the projection $\eta\colon \mathcal{N}(X)\to X$ is called the {\bf Nash mapping of $X$}. Sometimes we also call $\eta\colon \mathcal{N}(X)\to X$ the Nash transformation of $X$. The notion of Nash transformation trivially extends to algebraic varieties over a field of arbitrary characteristic.

An important problem in the resolution of singularities related to Nash transformations is the following conjecture:

\begin{conjecture}
A finite succession of Nash transformations resolves the singularities of $X$.
\end{conjecture}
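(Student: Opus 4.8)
The plan is to attack this by producing a numerical invariant of the singularities that is nonincreasing under the Nash transformation and that drops strictly wherever $\eta$ fails to be a local isomorphism, and then to argue that such an invariant can only decrease finitely many times. Since the Gauss mapping $\nu(x)=T_xX$ is already defined on $X\setminus\Sing{X}$, the set $\mathcal{N}(X)$ agrees there with the graph of $\nu$, so $\eta$ restricts to an isomorphism over the regular locus; by Nobile's theorem the mapping $\eta$ fails to be an isomorphism precisely when $X$ is singular. Hence the entire content is to show that iterating $\eta$ eventually eliminates all singular points, i.e.\ that the Nash tower over $X$ is finite.

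First I would fix a good candidate invariant attached to each point of $\Sing{X}$: natural choices are the multiplicity, the local embedding dimension, or a Nash-type multiplicity recording the geometry of the limits of tangent spaces. Because the Nash transformation is built exactly to separate these limiting tangent planes, one expects their ``spread'' to be simplified at each step. I would then verify two semicontinuity properties: (i) the invariant does not increase on passing to $\mathcal{N}(X)$, which is a standard upper-semicontinuity statement for tangent cones and limits of tangent planes; and (ii) at any point where $\eta$ is not a local isomorphism the invariant decreases strictly. Property (ii) is the crux.

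The key steps, in order, would be: (a) reduce to the local, analytically irreducible situation by working in a neighbourhood of a singular point and passing to branches; (b) compare $\mathcal{N}(X)$ with the projectivized tangent cone and with a suitable tower of ordinary blowups, so that the Nash transformation is dominated by a modification whose termination is controlled by Hironaka-style resolution; and (c) transport the chosen decreasing invariant through this comparison to deduce finiteness of the Nash tower. In low dimension this program is realizable: for curves the multiplicity strictly drops and termination is classical, while for surfaces the argument can be run after normalization, in the spirit of the known results on normalized Nash transformations.

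The hard part will be establishing the strict decrease in (ii) for dimension $\geq 3$ and in positive characteristic. This is exactly where the statement is delicate: the Nash transformation may fail to separate tangent directions enough to simplify the singularity, and in positive characteristic there exist singular varieties that are isomorphic to their own Nash transformation, so no invariant of the above type can strictly decrease. Thus the real obstacle is not the inductive bookkeeping but the geometric input that each Nash step genuinely simplifies the singularity; in full generality that input is unavailable, which is why the assertion stands here as a conjecture rather than a theorem.
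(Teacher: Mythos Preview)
There is no proof in the paper to compare against: the statement is presented as a conjecture (the Nash--Semple conjecture), and the paper explicitly says that discussing it is \emph{not} its purpose. More importantly, the paper records that the conjecture is now known to be \emph{false} in general: it cites the preprint of Castillo, Duarte, Leyton-\'Alvarez and Liendo giving counterexamples in every dimension $d\geq 4$, so Nash--Semple remains open only in dimensions $2$ and $3$.

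This means your proposal cannot be completed as stated. You correctly identify that the crux is step (ii), the strict decrease of some invariant whenever $\eta$ is not a local isomorphism, and you even note at the end that ``in full generality that input is unavailable.'' That caveat is not a technical gap to be filled later; the existence of counterexamples in dimension $\geq 4$ shows that \emph{no} invariant of the type you describe can both be nonincreasing under the Nash transformation and strictly decrease at every non-isomorphism point. Any argument of the shape (a)--(c) would, if it went through, prove the conjecture in all dimensions, contradicting the known counterexamples. So the approach is not merely incomplete---it is blocked by the falsity of the statement.

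Your discussion of the low-dimensional cases is accurate in spirit (curves are classical; surfaces need normalization, as in Spivakovsky's theorem), and your remark about positive characteristic is also on target. But since the paper neither proves nor attempts to prove the conjecture, there is nothing further to compare.
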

This problem was proposed by Nash in a private communication to Hironaka in the early sixties (cf. \cite[p. 412]{Spivakovsky:1990}) and it was also posed by Semple in \cite{Semple:1954}. This is why we call it Nash-Semple's conjecture.

It is not the purpose of this article to discuss this conjecture, but it is important to say that a relevant partial answer to the above conjecture was given by Spivakovsky in \cite{Spivakovsky:1990}. He showed that in the case of surfaces, the normalised Nash transformations (i.e. Nash transformations followed by normalizations) resolve the singularities.
Recently, in the preprint \cite{CastilloDLL:2024}, the authors gave a very important partial answer to Nash-Semple's conjecture. They presented counterexamples for this conjecture for any dimension $d\geq 4$. Thus, Nash-Semple's conjecture is only an open problem in dimensions 2 and 3. However, there are still several important families of analytic sets for which this conjecture is an open problem, for example, hypersurfaces in all dimensions $d\geq 2$.

Probably the most basic and fundamental result related to the Nash transformation and Nash-Semple's conjecture is the following result proved by Nobile \cite{Nobile:1975}:
\begin{theorem}[Theorem of Nobile]\label{thm:Nobile}
Let $X$ be a pure $d$-dimensional analytic subset in $\C^n$. Then, $X$ is analytically smooth if and only if $\eta\colon \mathcal{N}(X)\to X$ is an analytic isomorphism.
\end{theorem}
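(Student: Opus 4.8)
The plan is to prove the two implications separately: the forward direction is immediate, while the reverse direction rests on a local criterion for smoothness phrased through Kähler differentials.

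For the easy direction, suppose $X$ is analytically smooth. Then $\Sing{X}=\emptyset$, so the Gauss mapping $\nu\colon X\to Gr_{\C}(d,n)$ is defined and analytic on \emph{all} of $X$. Its graph is consequently already closed in $X\times Gr_{\C}(d,n)$, so taking the closure changes nothing: $\mathcal{N}(X)$ equals this graph. The Nash mapping $\eta$ is then the restriction of the first projection, which is an analytic isomorphism with explicit analytic inverse $x\mapsto (x,\nu(x))$.

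For the reverse direction, assume $\eta$ is an analytic isomorphism. Composing $\eta^{-1}$ with the second projection produces an analytic extension $\widetilde{\nu}\colon X\to Gr_{\C}(d,n)$ of the Gauss map to all of $X$, and pulling back the tautological rank-$d$ bundle gives a subbundle $\tau\subset X\times\C^n$ with $\tau_x=T_xX$ on $\Reg{X}$. Writing $\eta=(\eta_1,\dots,\eta_n)$, the differentials $d\eta_i$ generate $\Omega^1_{\mathcal{N}(X)}$, since $\eta$ is an isomorphism and the $dx_i$ generate $\Omega^1_X$; this yields a surjection $q\colon\mathcal{O}_{\mathcal{N}(X)}^{\,n}\twoheadrightarrow\Omega^1_{\mathcal{N}(X)}$. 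Over $\eta^{-1}(\Reg{X})$ the kernel of $q$ is exactly the annihilator $\tau^{\perp}$ of $\tau$, of rank $n-d$. Granting that $\tau^{\perp}\subseteq\ker q$ everywhere, $q$ factors through the rank-$d$ locally free quotient $\tau^{*}=\mathcal{O}^{n}/\tau^{\perp}$, so each stalk $\Omega^1_{\mathcal{N}(X),p}$ is a quotient of a free module of rank $d$. By Nakayama its minimal number of generators is at most $d$; and since $\Omega^1_{\mathcal{N}(X),p}\otimes\C\cong\mathfrak{m}_p/\mathfrak{m}_p^{2}$, the embedding dimension at $p$ is at most $d=\dim X$. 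As it is always at least $d$, equality holds, $p$ is a regular point, and $\mathcal{N}(X)\cong X$ is smooth.

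The crux, and the step I expect to demand the most care, is the inclusion $\tau^{\perp}\subseteq\ker q$ across $\Sing{X}$. For a local section $\xi$ of $\tau^{\perp}$ the form $\sum_i\xi_i\,d\eta_i$ vanishes on the dense open set $\eta^{-1}(\Reg{X})$, but it could a priori survive as a \emph{torsion} element of $\Omega^1_{\mathcal{N}(X)}$, so the vanishing is not formal. I would resolve this by invoking that $\mathcal{N}(X)$ is the blow-up of $X$ along the $d$-th Fitting (Jacobian) ideal $\mathcal{F}$ of $\Omega^1_X$: on this blow-up $\mathcal{F}\cdot\mathcal{O}_{\mathcal{N}(X)}$ is invertible, and that invertibility is precisely what trivialises the torsion and identifies the image of $q$ with the locally free sheaf $\tau^{*}$. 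Because $\eta$ is assumed to be an isomorphism, $\mathcal{F}$ must already be invertible on $X$ itself; feeding this into the surjection above gives the embedding-dimension bound and hence smoothness. Matching the sheaf-theoretic kernel of $q$ with the geometric annihilator $\tau^{\perp}$ at the singular points is thus the heart of the argument.
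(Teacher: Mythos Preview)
The paper does not itself prove Theorem~\ref{thm:Nobile}: it is quoted in the Introduction as Nobile's 1975 result, with Teissier's algebraic reproof also mentioned. What the paper \emph{does} prove is the strictly stronger Theorem~\ref{thm:metric_nobile}, which only assumes that $\eta$ is a locally bi-Lipschitz homeomorphism, and this of course subsumes the analytic-isomorphism hypothesis. The route there is entirely unlike yours: via the o-minimal Theorem~\ref{c11_smooth} one shows that $X$ is a $C^{1,1}$ submanifold by a metric argument---estimating distances between limit tangent planes along definable arcs and invoking Lemma~\ref{lem:estimate_derivative} to force a contradiction---and then appeals to Milnor's observation that a $C^1$ complex analytic set is analytically smooth. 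No K\"ahler differentials, Fitting ideals, or universal properties of blow-ups enter; that is precisely the novelty the paper is advertising.

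Your outline follows the classical algebraic line, and the reduction to the inclusion $\tau^{\perp}\subseteq\ker q$ together with the Nakayama/embedding-dimension endgame is correct. But your proposed resolution of that inclusion does not close the gap. You assert that invertibility of $\mathcal{F}=\mathrm{Fitt}_d(\Omega^1_X)$ ``trivialises the torsion and identifies the image of $q$ with the locally free sheaf $\tau^{*}$,'' yet invertibility of a Fitting ideal does not by itself kill torsion in $\Omega^1_X$, nor does it directly produce a surjection $\tau^{*}\to\Omega^1_X$. The natural map runs the \emph{other} way, $\Omega^1_X\twoheadrightarrow\tau^{*}$, and showing that every local section of $\tau^{\perp}$ already lies in the $\mathcal{O}_X$-span of the Jacobian rows across $\Sing{X}$ is exactly the substantive step in Nobile's and Teissier's proofs; it requires either a Weierstrass-type argument exploiting the complex structure or a careful analysis of what invertibility of the Jacobian ideal means for a reduced equidimensional scheme in characteristic zero (recall the theorem fails in positive characteristic, so something specific must be used). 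Your proposal names the right crux but leaves it as a slogan; as written it is an outline of the classical proof rather than a proof.
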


In fact, the precise statement of the Theorem of Nobile is the following: {\it Let $\bar{k}$ be an algebraically closed field of characteristic zero (resp. $\bar{k} = \C$), $X$ be a pure $d$-dimensional algebraic (resp. analytic) variety over $\bar{k}$. Then, $X$ is smooth if and only if $\eta\colon \mathcal{N}(X)\to X$ is an isomorphism.}

The proof in \cite{Nobile:1975} shows that it suffices to prove the theorem in the analytic case with $\bar{k} = \C$, as stated in Theorem \ref{thm:Nobile}. The proof presented in \cite{Nobile:1975} is an analytic proof and systematically uses the complex structure. A more algebraic proof of the Theorem of Nobile is due to Teissier \cite{Teissier:1977} (cf. \cite{Nobile:2024}). 

The version of the Theorem of Nobile in positive characteristic does not work. Indeed, it is shown in \cite{Nobile:1975} that for the cusp $X:\, x^2=y^3 $, working in characteristic two, the Nash mapping $\eta\colon \mathcal{N}(X)\to X$ is an isomorphism, although, of course, $X$ is singular.
Recently, in the article \cite{DuarteN-B:2022}, the authors showed that if $X$ is a normal variety of any dimension, $\eta\colon \mathcal{N}(X)\to X$ is an isomorphism if and only if $X$ is smooth, even if the base field has positive characteristic. More about the Theorem of Nobile can be found in \cite{Nobile:2024}.

However, as far as the author knows, there is no proof for the real analytic version of this theorem, even more so when one only asks for $C^{k}$ smoothness. So, it is natural to ask: {\it Does the real version of the Theorem of Nobile hold true?}

While the Theorem of Nobile was fundamental in complex geometry, the real case remained an open question for 50 years.

Another natural problem is whether the analytic diffeomorphism condition of the Nash mapping can be weakened.

In this article, we completely solve these two questions. Indeed, we present a novel approach to the desingularisation of analytic sets through Nash transformations. With this approach, we prove the real version of the Theorem of Nobile. Notably, our method also demonstrates that, in the complex case, it suffices to simply require that the Nash mapping be locally bi-Lipschitz.

Recall that we have a distance defined in $Gr_{\R}(d,n)$ as follows: Given two linear subspaces $L_1$ and $L_2$ in $Gr_{\R}(d,n)$, 
\begin{eqnarray*}
\delta(L_1,L_2)&=& d_H(S_1,S_2)\\
            &=&\max\{\sup \limits_{a\in S_1}d(a,S_2),\sup \limits_{b\in S_2}d(b,S_1)\},
\end{eqnarray*}
where $S_i=L_i\cap \mathbb{S}^{n-1}$, $i=1,2$, and $d(a,S)=\inf\{\|a-x\|;x\in S\}$. For any set $X\subset \R^n$ that is pure $d$-dimensional and locally definable in an o-minimal structure on $\R$, we denote by $\delta_X$ the distance on $X\times Gr_{\R}(d,n)$ given by $\delta_X((x_1,L_1),(x_2,L_2))=\max \{\|x_1-x_2\|,\delta(L_1,L_2)\}$. So, $\mathcal{N}(X)$ is endowed with the distance induced by $\delta_X$.

So, we obtain the following version of the Theorem of Nobile.

\begin{customthm}{\ref{thm:metric_nobile}}
Let $X\subset \mathbb{C}^n$ be a pure dimensional complex analytic set. Then, $X$ is analytically smooth if and only if $\eta\colon \mathcal{N}(X)\to X$ is a homeomorphism that is locally bi-Lipschitz.
\end{customthm}

We also prove that $X$ is analytically smooth if and only if $\eta\colon \mathcal{N}(X)\to X$ is a homeomorphism such that $\eta^{-1}$ is (locally) $\alpha$-H\"older for all $\alpha\in (0,1)$ (see Corollary \ref{cor:c11_smooth_holder}).

We prove a real version of the Theorem of Nobile, when one only asks for $C^{k,1}$ smoothness (see Definitions \ref{defi:regulaties_one}, \ref{defi:regulaties_two}, and \ref{defi:regulaties_three}). Specifically, we prove the following result:

\begin{customthm}{\ref{thm:real_analytic_ck_smooth}}
Let $X\subset \R^n$ be a pure dimensional real analytic set, and let $k$ be a nonnegative integer number. Then, $X$ is a $C^{k+1,1}$ submanifold if and only if the mapping $\eta\colon\mathcal{N}(X)\to X$ is a homeomorphism such that $\eta^{-1}$ is $C^{k,1}$ smooth.
\end{customthm}

Consequently, we also obtain the following real version of the Theorem of Nobile.

\begin{customthm}{\ref{thm:real_analytic_smooth}}
Let $X\subset \R^n$ be a pure dimensional real analytic set. Then the following statements are equivalent:
\begin{enumerate}
 \item [(1)] $X$ is a real analytic submanifold;
 \item [(2)] the mapping $\eta\colon\mathcal{N}(X)\to X$ is a real analytic diffeomorphism;
 \item [(3)] the mapping $\eta\colon\mathcal{N}(X)\to X$ is a $C^{\infty}$ diffeomorphism;
 \item [(4)] $X$ is a $C^{\infty}$ submanifold.
\end{enumerate}
\end{customthm}

In fact, we obtain a stronger result.
Here, we present a proof that works even in the case of sets that are locally definable in an o-minimal structure under small assumptions (to know more about o-minimal geometry, see, for instance, \cite{Coste:1999} and \cite{Dries:1998}). 
For a subset $X\subset \R^n$, which is a $d$-dimensional set definable in an o-minimal structure, we say that $C_3(X)$ and $C_4(X)$ coincide linearly if for any $p\in X$ such that $C_4(X,p)$ is a $d$-dimensional linear subspace, then $C_3(X,p)=C_4(X,p)$.

We prove the following more general result:
\begin{customthm}{\ref{c11_smooth}}
Let $X\subset \R^n$ be a locally closed set that is pure $d$-dimensional and locally definable in an o-minimal structure on $\R$. Then, for a fixed nonnegative integer number $k$, $X$ is a $C^{k+1,1}$ smooth submanifold if and only if $C_3(X)$ and $C_4(X)$ coincide linearly and the mapping $\eta\colon\mathcal{N}(X)\to X$ is a homeomorphism such that $\eta^{-1}$ is $C^{k,1}$ smooth.  Moreover, $X$ is a $C^{\infty}$ submanifold if and only if the mapping $\eta\colon\mathcal{N}(X)\to X$ is a $C^{\infty}$ diffeomorphism. 
\end{customthm}

In fact, Theorem \ref{c11_smooth} holds true even for log-Lipschitz regularity (see Theorem \ref{c11_smooth_log-Lip}); however, surprisingly (and in contrast to Corollary \ref{cor:c11_smooth_holder} and the paper \cite{Sampaio:2025}), it does not hold true when we only require that $\eta$ is a homeomorphism such that $\eta^{-1}$ is $C^{0,\alpha}$ smooth for all $\alpha\in (0,1)$ (see Example \ref{exam:holder_fails}). Many other examples are also presented in order to show the sharpness of this result.

We prove Theorem \ref{c11_smooth} in Subsection \ref{subsec:main_results}. We prove this theorem when $k=0$ using the tools of o-minimal geometry and Lipschitz geometry in an involved way. For the case $k>0$, we use a bootstrap argument to show that if $X$ is a $C^{s,1}$ smooth submanifold for some $1\leq s\leq k$ and $\eta^{-1}$ is $C^{k,1}$ smooth, then $X$ is a $C^{s+1,1}$ smooth submanifold (see Claim \ref{claim:regularity}).

We finish this Introduction by noting that an important step in Nobile's proof in \cite{Nobile:1975} is the fact that two isomorphic complex analytic sets have isomorphic Nash transformations. However, this cannot be applied to real analytic sets that are bi-Lipschitz homeomorphic. Indeed, $X=\{(x,y,z)\in \R^3; x^{2025}+y^{2025}+z^{2025}=0\}$ and $\R^2$ are bi-Lipschitz homeomorphic (see \cite[Proposition 1.9]{Sampaio:2020}), but $\mathcal{N}(X)$ and $\mathcal{N}(\R^2)$ are not bi-Lipschitz homeomorphic.

\bigskip

 \noindent{\bf Acknowledgements}. The author thanks Alexandre Fernandes for his interest in this research and for the several interesting conversations that helped the author to improve the results of the paper. The author also thanks Bernard Teissier, Daniel Duarte, Euripedes da Silva, and Lev Birbrair for their interest in this research.

 \noindent{\bf Added in proof}. After this article was finished, Bernard Teissier informed the author of the interesting work by Oneto and Zatini \cite{OnetoZ:1991}, in which they proved the real algebraic version of the Theorem of Nobile. Specifically, in the case where $k$ is a characteristic zero field and $X$ is a pure dimensional algebraic variety over $k$, they showed that $X$ is smooth if and only if $\eta\colon \mathcal{N}(X)\to X$ is an (algebraic) isomorphism (see \cite[Corollary 4.2]{OnetoZ:1991}). Note that the real analytic case is not proved in \cite{OnetoZ:1991}.

\section{Preliminaries}\label{sec:preliminaries}

All the subsets of $\R^n$ or $\mathbb{C}^n$ considered in the paper are supposed to be equipped with the Euclidean distance. When we consider other distances, this is clearly emphasised.

\subsection{O-minimal structures}
\begin{definition}\label{definivel}
A structure on $\R$ is a collection $\mathcal{S}=\{\mathcal{S}_n\}_{n\in \mathbb{Z}_{>0}}$ where each $\mathcal{S}_n$ is a set of subsets of $\R^n$, satisfying the following axioms:
\begin{itemize}
\item [1)] All algebraic subsets of $\R^n$ are in $\mathcal{S}_n$;
\item [2)] For every $n$, $\mathcal{S}_n$ is a Boolean subalgebra of the powerset of $\R^n$;
\item [3)] If $A\in \mathcal{S}_m$ and $B \in S_n$, then $A \times B \in \mathcal{S}_{m+n}$.
\item [4)] If $\pi \colon \R^{n+1} \to \R^n$ is the projection on the first $n$ coordinates and $A\in \mathcal{S}_{n+1}$, then $\pi(A)\in \mathcal{S}_n$.
\end{itemize}
An element of $\mathcal{S}_n$ is called {\bf definable in $\mathcal{S}$}.
The structure $\mathcal{S}$ is said {\bf o-minimal} if it satisfies the following condition:
\begin{itemize}
\item [5)] The elements of $\mathcal{S}_1$ are precisely finite unions of points and intervals.
\end{itemize}
\end{definition}

\begin{definition}
A mapping $f\colon A\subset \R^n \to \R^m$ is called {\bf definable in $\mathcal{S}$} if its
graph is an element of $\mathcal{S}_{n+m}$. 
\end{definition}

A structure $\mathcal{S}$ is said {\bf polynomially bounded} if it satisfies the following condition:
\begin{itemize}
\item [6)]  For every function $f\colon \R \to  \R$ that is definable in $\mathcal{S}$, there exists a positive integer number $N$ such that $|f(t)|\leq t^N$ for all sufficiently large positive $t$.
\end{itemize}

We say that a set $X\subset \R^n$ is {\bf locally definable in $\mathcal{S}$} if for any $x\in \overline{X}$ there is a neighbourhood $U\subset \R^n$ of $x$ such that $X\cap U$ is definable in $\mathcal{S}$.

Throughout this paper, we fix an o-minimal structure $\mathcal{S}$ on $\R$.

In the sequel, the adjective {\bf definable} denotes definable in $\mathcal{S}$.

We say that a set $X\subset \R^n$ is {\bf locally definable} if for any $x\in \overline{X}$ there is a neighbourhood $U\subset \R^n$ of $x$ such that $X\cap U$ is definable.

Along this text, we will need the two following lemmas, which are inspired by the results of the paper \cite{Birbrair:2008}.
\begin{lemma}\label{lem:estimate_derivative}
Let $\gamma \colon [0,\varepsilon) \to \R$ be a continuous definable function such that $g(0)=0$. Assume that the germ of $g$ at $0$ is not the germ of the zero function. Then
$$
\lim\limits_{t\to 0^+}\frac{g(t)}{g'(t)}=0.
$$
\end{lemma}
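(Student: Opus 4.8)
The plan is to prove Lemma~\ref{lem:estimate_derivative}, which asserts that $\lim_{t\to 0^+} g(t)/g'(t) = 0$ for a continuous definable $g$ with $g(0)=0$ whose germ at $0$ is nonzero. (Note the statement names the function $\gamma$ but then refers to $g$; I will treat $g\colon[0,\varepsilon)\to\R$ as the definable function in question.) The core idea is that a one-variable definable function is, near $0$, controlled by its leading asymptotic term, and for such functions differentiation roughly divides by $t$, so $g/g'$ behaves like $t$ times a bounded factor and hence tends to $0$. The main tools are the standard consequences of o-minimality in one variable: the monotonicity theorem and the fact that a definable germ is eventually $C^1$ and either identically zero or nonvanishing on a punctured neighborhood of $0$.

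First I would invoke the Monotonicity Theorem for o-minimal structures to choose $\varepsilon>0$ small enough that on $(0,\varepsilon)$ the function $g$ is $C^1$, and both $g$ and $g'$ are either identically zero or nowhere zero with constant sign. Since the germ of $g$ is not the zero germ, $g$ is not eventually zero, so after shrinking $\varepsilon$ we may assume $g(t)\neq 0$ for $t\in(0,\varepsilon)$; by continuity and $g(0)=0$, $g$ is then strictly monotone on this interval, so $g'$ has constant sign and $g'(t)\neq 0$ there as well. This guarantees the quotient $g(t)/g'(t)$ is well defined on $(0,\varepsilon)$, and I would set $h(t) = g(t)/g'(t)$, which is itself a definable function on $(0,\varepsilon)$.

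Next I would analyze the limit of $h$ at $0$. Because $h$ is definable in one variable, by o-minimality $\lim_{t\to 0^+} h(t)$ exists in $\R\cup\{\pm\infty\}$; call it $L$. The goal is to show $L=0$. Here the key step is an asymptotic comparison: I would argue that $h(t)/t \to$ a finite limit, equivalently that $g'$ cannot be asymptotically negligible relative to $g/t$. Concretely, consider $G(t) = g(t)/t$ where it makes sense, or better, compare the growth of $\log|g|$. Writing $\frac{d}{dt}\log|g(t)| = g'(t)/g(t) = 1/h(t)$, the claim $h(t)\to 0$ is equivalent to $|g'(t)/g(t)| \to +\infty$, i.e. $\log|g|$ blows up in derivative as $t\to 0^+$. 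Since $|g(t)|\to 0$, $\log|g(t)|\to -\infty$; for a definable function tending to $-\infty$ monotonically, its derivative must be unbounded near the endpoint (a definable function with bounded derivative on $(0,\varepsilon)$ extends continuously to $0$). Thus $(\log|g|)' = 1/h$ is unbounded, and being definable with a limit, $1/h \to \pm\infty$, forcing $L = 0$.

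The main obstacle I anticipate is making the step ``$\log|g| \to -\infty$ implies its derivative is unbounded, hence $1/h\to\infty$'' fully rigorous using only o-minimality, rather than appealing to a Puiseux-type expansion (which would require the polynomially bounded hypothesis that this lemma does not assume). The clean way around this is the observation that any definable function $\phi$ on $(0,\varepsilon)$ with $\lim_{t\to 0^+}\phi'(t)$ finite is bounded, hence extends continuously to $0$; applying this contrapositively to $\phi=\log|g|$ (which cannot extend continuously to $0$ since it diverges to $-\infty$) yields that $\lim_{t\to 0^+}(\log|g|)'(t)$ is not finite. Combined with the existence of the limit of the definable quantity $1/h$, this gives $1/h\to\pm\infty$ and therefore $h\to 0$, completing the argument. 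I would take care to confirm that this boundedness-from-finite-derivative fact holds in the o-minimal setting via the Mean Value Theorem applied to the $C^1$ definable function on each compact subinterval together with the monotone convergence of $\phi'$.
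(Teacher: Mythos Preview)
Your argument is correct. After the common preliminary step (using the monotonicity theorem to ensure $g$ and $g'$ are $C^1$ and nowhere vanishing on $(0,\varepsilon)$, so that the definable quotient $g/g'$ has a limit $L\in[-\infty,+\infty]$), you and the paper diverge. The paper applies L'Hospital's rule to the auxiliary quotient $tg(t)/g(t)$: since $\lim g/g'$ exists, so does $\lim (tg)'/g' = \lim(t + g/g')$, and L'Hospital then forces this to equal $\lim tg/g = \lim t = 0$, giving $L=0$ in one line. You instead look at $\phi=\log|g|$, note that $\phi'=g'/g=1/h$, and argue via the Mean Value Theorem that a finite limit for $\phi'$ would make $\phi$ Lipschitz near $0$ and hence bounded, contradicting $\phi\to -\infty$; thus $1/h\to\pm\infty$ and $h\to 0$. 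Your route is slightly longer but arguably more transparent about \emph{why} the limit vanishes (the logarithmic derivative must blow up), and it avoids the somewhat unexpected auxiliary function. One small point worth tightening in a final write-up: your boundedness-from-bounded-derivative step does not actually need $\phi=\log|g|$ to be definable (and in a general o-minimal structure it need not be); only $g'/g$ must be definable for its limit to exist, and that is automatic.
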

\begin{proof}
Since $g$ is definable, the $\lim\limits_{t\to 0^+}\frac{g(t)}{g'(t)}$ exists, maybe its $+\infty$ or $-\infty$. By L'Hospital rule, we have
$$
0=\lim\limits_{t\to 0^+}\frac{tg(t)}{g(t)}=\lim\limits_{t\to 0^+}\frac{tg'(t)+g(t)}{g'(t)}=\lim\limits_{t\to 0^+}\frac{g(t)}{g'(t)}.
$$
\end{proof}

\begin{lemma}\label{lem:estimate_derivative_log}
Let $\gamma \colon [0,\varepsilon) \to [0,+\infty)$ be a continuous definable function such that $g(0)=0$. Assume that the germ of $g$ at $0$ is not the germ of the zero function. Then
$$
\lim\limits_{t\to 0^+}\frac{g'(t)}{g(t)\log g(t)}=-\infty.
$$
\end{lemma}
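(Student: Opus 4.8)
The plan is to reduce the statement to Lemma \ref{lem:estimate_derivative} by means of a logarithmic substitution (I read the function in the hypothesis as $g$, matching the conclusion and the previous lemma). First I would record the qualitative behaviour of $g$ near $0$: since $g$ is definable, nonnegative and continuous with $g(0)=0$ and with germ at $0$ different from the zero germ, o-minimality (the Monotonicity Theorem) provides some $\delta>0$ on which $g$ is $C^1$, strictly positive and strictly increasing, so that $g(t)\to 0^+$ and $g'(t)>0$; shrinking $\delta$ I may also assume $g(t)<1$, hence $\log g(t)<0$, on $(0,\delta)$. In particular the quotient $\frac{g'(t)}{g(t)\log g(t)}$ is well defined and negative on $(0,\delta)$, which already matches the expected sign of the limit.

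The key observation is the elementary identity
$$
\frac{g'(t)}{g(t)\log g(t)}=\frac{d}{dt}\Big(\log\big(-\log g(t)\big)\Big),
$$
obtained by differentiating the composition $w(t):=\log(-\log g(t))$ with the chain rule. Since $g(t)\to 0^+$, we have $-\log g(t)\to +\infty$ and hence $w(t)\to +\infty$ as $t\to 0^+$. Thus the lemma is equivalent to the assertion that $w'(t)\to -\infty$, and the problem becomes purely a statement about the single definable function $w$.

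The heart of the argument is then the following general fact: if $w$ is a definable function on $(0,\delta)$ with $w(t)\to +\infty$ as $t\to 0^+$, then $w'(t)\to -\infty$. To prove it I would apply Lemma \ref{lem:estimate_derivative} to $v:=1/w$, which extends continuously by $v(0)=0$ to a definable function whose germ at $0$ is nonzero; that lemma gives $\frac{v}{v'}\to 0$, and since $v'=-w'/w^2$ one has $\frac{v}{v'}=-\frac{w}{w'}$, whence $\frac{w}{w'}\to 0$ and so $\frac{w'}{w}\to -\infty$ (the sign being forced, via the identity above, by $w>0$ and $w'<0$ near $0$). Writing $w'=\frac{w'}{w}\cdot w$ as a product of a factor tending to $-\infty$ and a factor tending to $+\infty$ yields $w'(t)\to -\infty$, which is exactly the claim.

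I do not expect a serious technical obstacle; the only points requiring care are the qualitative reductions that o-minimality supplies for free (strict monotonicity and $C^1$-smoothness of $g$ near $0$, and the existence of the relevant one-sided limits of definable functions) together with the careful bookkeeping of signs, which is what pins the limit to $-\infty$ rather than $+\infty$. The single genuine idea is the passage to $w=\log(-\log g)$: it converts the delicate $0\cdot\infty$ indeterminacy $\frac{g}{g'}\cdot\log g$ into a clean derivative and lets Lemma \ref{lem:estimate_derivative} do all of the analytic work.
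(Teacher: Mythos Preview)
Your proof is correct and follows essentially the same strategy as the paper: reduce to Lemma~\ref{lem:estimate_derivative} by applying it to an auxiliary definable function built from $\log g$. The only difference is the choice of auxiliary function: the paper sets $f=-1/\log g$ and observes directly that $f'/f=-\dfrac{g'}{g\log g}$, so a single application of Lemma~\ref{lem:estimate_derivative} (plus the obvious sign check) finishes; you instead introduce $w=\log(-\log g)$, identify $w'=\dfrac{g'}{g\log g}$, and then apply Lemma~\ref{lem:estimate_derivative} to $v=1/w$, which costs you the extra step of passing from $w'/w\to-\infty$ to $w'\to-\infty$. Both routes are clean, but the paper's substitution is one layer shorter.
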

\begin{proof}

By shrinking $\varepsilon$, if necessary, we may assume that $1>g(t)>0$ for all $t\in (0,\varepsilon)$.
Define $h,f\colon (0,\varepsilon) \to \R$ by $h(t)=\log g(t)$ and $f(t)=-\frac{1}{h(t)}$.

We have that $f$ is definable, $f(t)>0$ for all $t\in (0,\varepsilon)$ and $\lim\limits_{t\to 0^+} f(t)=0$. Lemma \ref{lem:estimate_derivative},
$$
\lim\limits_{t\to 0^+}\frac{f'(t)}{f(t)}=+\infty.
$$
However,
$$
\frac{f'(t)}{f(t)}=-\frac{h'(t)}{h(t)}=-\frac{g'(t)}{g(t)\log g(t)}.
$$
Therefore,
$$
\lim\limits_{t\to 0^+}\frac{g'(t)}{g(t)\log g(t)}=-\lim\limits_{t\to 0^+}\frac{f'(t)}{f(t)}=-\infty.
$$
\end{proof}

\subsection{Tangent cones}
\begin{definition}\label{def:tg_cone}
Let $X\subset \R^{n}$ be a subset and let $p\in \overline{X}$.
We say that $v\in \R^{n}$ is {\bf a tangent vector of $X$ at $p$} if there are a sequence $\{x_j\}_{j\in \mathbb{N}}\subset X$ and a sequence of positive real numbers $\{t_j\}_{j\in \mathbb{N}}$ such that $\lim\limits_{j\to \infty} t_j=0$ and $\lim\limits_{j\to \infty} \frac{1}{t_j}(x_j-p)=v$. The set of all tangent vectors of $X$ at $p$ is denoted by $C_3(X,p)$ and is called the {\bf tangent cone of $X$ at $p$}.
\end{definition}
\begin{remark}
Let $X\subset \mathbb{C}^n$ be a complex analytic (resp. algebraic) set and $x_0\in X$. In this case, $C(X,x_0)$ (resp. $C(X, \infty)$) is the zero set of a set of complex homogeneous polynomials, as we can see in \cite[Theorem 4D]{Whitney:1972}. In particular, $C(X,x_0)$ (resp. $C(X, \infty)$) is the union of complex lines passing through the origin $0\in\mathbb{C}^n$.
\end{remark}

\begin{definition}
Given a pure $d$-dimensional definable set $X\subset \R^n$ and $p\in X$, $C_4(X,p)$ is the union of the $d$-dimensional linear subspaces $T\subset \R^n$ such that there is a sequence $\{x_j\}_{j\in \mathbb{N}}\subset X\setminus Sing_1(X)$ satisfying $\lim\limits_{j\to +\infty}T_{x_j}X=T$, where $Sing_1(X)$ denotes the set of points $x\in X$ such that $X$ is not a $C^1$ submanifold around $x$.
\end{definition}

Note that $C_3(X,p)\subset C_4(X,p)$ for all $p\in X$.

\subsection{Multiplicity and degree of real sets}
This subsection is closely related to the subsection with the same title in \cite{FernandesJS:2022}.

Let $X\subset \R^{n}$ be a $d$-dimensional real analytic set with $0\in X$ and 
$$
X_{\C}= V(\mathcal{I}_{\R}(X,0)),
$$
where $\mathcal{I}_{\R}(X,0)$ is the ideal in $\mathbb{C}\{z_1,\ldots,z_n\}$ generated by the complexifications of all germs of real analytic functions that vanish on the germ $(X,0)$. We know that $X_{\C}$ is a germ of a complex analytic set and $\dim_{\C}X_{\C}=\dim_{\R}X$. Then, for a linear projection $\pi:\C^{n}\to\C^d$ such that $\pi^{-1}(0)\cap C(X_{\C},0) =\{0\}$, there exists an open neighbourhood $U\subset \C^n$ of $0$ such that $\# (\pi^{-1}(x)\cap (X_{\C}\cap U))$ is constant for a generic point $x\in \pi(U)\subset\C^d$. This number is the multiplicity of $X_{\C}$ at the origin and is denoted by $m(X_{\C},0)$.

\begin{definition}
With the above notation, we define the multiplicity of $X$ at the origin by $m(X,0):=m(X_{\C},0)$.
\end{definition}

More about the multiplicity of real analytic sets can be learnt in \cite{Sampaio:2020b}.

\begin{remark}\label{transversal-cones}
It follows from the works in \cite{Sampaio:2020b} and \cite{FernandesJS:2022} that if $X\subset \R^n$ is a pure $d$-dimensional real analytic set and $\pi\colon \R^n\to \R^d$ is a projection such that $\pi^{-1}(0)\cap C(X,0)=\{0\}$, then there are an open neighbourhood $U\subset \R^n$ of $0$ and a subanalytic set $\sigma\subset \R^d$ such that $\dim_{\R}\sigma <d$ and $m(X,0)\equiv \# (\pi^{-1}(t)\cap U)(\mod \, 2)$ for all small enough $t\in \R^d\setminus \sigma$, where $\# (\pi^{-1}(t)\cap U)$ denotes the cardinality of $\pi^{-1}(t)\cap U$.
\end{remark}

\subsection{Smooth mappings on sets}

In this subsection, we recall the definitions of smoothness of mappings defined only in subsets of manifolds. 

\begin{definition}\label{defi:regulaties_one}
Let $(M_1,d_1)$ and $(M_2,d_2)$ be two metric spaces. We say that a map $f\colon M_1\to M_2$ is:
\begin{itemize}
    \item {\bf $\alpha$-H\"older} if there is a constant $C>0$ such that 
    $$
    d_2(f(x),f(y))\leq Cd_1(x,y)^{\alpha} 
    $$
    for all $x,y\in M_1$. In this case, we also say that $f$ is {\bf $C^{0,\alpha}$ smooth}. When $f$ is 1-H\"older, we also say that $f$ is {\bf Lipschitz};
    \item {\bf $\gamma$-log-Lipschitz} if there is a constant $C>0$ such that
    $$
    d_2(f(x),f(y))\leq Cd_1(x,y)|\log (d_1(x,y))|^{\gamma} 
    $$
    for all $x,y\in M_1$ such that $d_1(x,y)\leq 1/2$.
    In this case, we also say that $f$ is {\bf $C^{0,\gamma-log-Lip}$ smooth}. When $f$ is $1$-log-Lipschitz, we simply say that $f$ is {\bf log-Lipschitz} or $f$ is {\bf $C^{0,log-Lip}$ smooth}.
\end{itemize}
When $M_1$ and $M_2$ are two $C^{\infty}$ smooth manifolds, $k\geq 0$ and $\beta\in [0,1]\cup \{\gamma-log-Lip;$ $\gamma\geq 0\}$, we say that a map $f\colon M_1\to M_2$ is $C^{k+1,\beta}$ smooth if its derivative $Df\colon TM_1\to TM_2$ is $C^{k,\beta}$ smooth.
\end{definition}

\begin{definition}\label{defi:regulaties_two}
Let $M_1$ and $M_2$ be two $C^{\infty}$ smooth manifolds. Let $f\colon A_1\to A_2$ be a mapping, where $A_1$ and $A_2$ are subsets of $M_1$ and $M_2$, respectively. We say that $f$ is $C^{k,\alpha}$ (resp. $C^{\infty}$) smooth if for each $p\in A_1$, there are an open neighbourhood $U\subset M_1$ of $p$ and a $C^{k,\alpha}$ (resp. $C^{\infty}$) smooth mapping $F\colon U\to M_2$ such that $F|_{A_1\cap U}=f|_{A_1\cap U}$. We say that $f$ is a $C^{k,\alpha}$ (resp. $C^{\infty}$) diffeomorphism if $f$ is a bijection and, moreover, $f$ and $f^{-1}$ are $C^{k,\alpha}$ (resp. $C^{\infty}$) smooth.
\end{definition}
\begin{definition}\label{defi:regulaties_three}
Let $M_1$ and $M_2$ be two real (resp. complex) analytic manifolds. Let $f\colon A_1\to A_2$ be a mapping, where $A_1$ and $A_2$ are subsets of $M_1$ and $M_2$, respectively. We say that $f$ is real (resp. complex) analytic smooth if for each $p\in A_1$, there are an open neighbourhood $U\subset M_1$ of $p$ and a real (resp. complex) analytic smooth mapping $F\colon U\to M_2$ such that $F|_{A_1\cap U}=f|_{A_1\cap U}$. We say that $f$ is a real (resp. complex) analytic diffeomorphism if $f$ is a bijection and, moreover, $f$ and $f^{-1}$ are real (resp. complex) analytic smooth.
\end{definition}

\section{Main results}\label{sec:main_results}

\subsection{Definable version of the Theorem of Nobile}\label{subsec:main_results}
\begin{theorem}\label{c11_smooth}
Let $X\subset \R^n$ be a locally closed set that is pure $d$-dimensional and locally definable in an o-minimal structure on $\R$. Then, for a fixed nonnegative integer number $k$, $X$ is a $C^{k+1,1}$ smooth submanifold if and only if $C_3(X)$ and $C_4(X)$ coincide linearly and the mapping $\eta\colon\mathcal{N}(X)\to X$ is a homeomorphism such that $\eta^{-1}$ is $C^{k,1}$ smooth.  Moreover, $X$ is a $C^{\infty}$ submanifold if and only if the mapping $\eta\colon\mathcal{N}(X)\to X$ is a $C^{\infty}$ diffeomorphism. 
\end{theorem}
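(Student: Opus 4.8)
The plan is to dispose of the two forward implications first and then concentrate on the backward direction, whose essential content is the case $k=0$. If $X$ is a $C^{k+1,1}$ submanifold, then locally $X$ is the graph of a $C^{k+1,1}$ map, so the Gauss map $\nu(x)=T_xX$ is $C^{k,1}$ (one derivative is lost); hence $\mathcal N(X)$ is exactly the graph of $\nu$, the projection $\eta$ is a homeomorphism, and $\eta^{-1}\colon p\mapsto(p,T_pX)$ is $C^{k,1}$. Moreover $C_3(X,p)=C_4(X,p)=T_pX$ for every $p$, so the two cones coincide linearly. The $C^{\infty}$ version is identical, which settles all the ``only if'' parts.

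For the converse, assume $C_3(X)$ and $C_4(X)$ coincide linearly and $\eta$ is a homeomorphism with $\eta^{-1}$ of class $C^{k,1}$. I would first translate this into pointwise data. Injectivity of $\eta$ forces $\eta^{-1}(p)$ to be a single point, so $C_4(X,p)$ is a single $d$-plane $L_p$ and the Grassmannian component $\T\nu\colon p\mapsto L_p$ of $\eta^{-1}$ is the continuous, $C^{k,1}$ extension of the Gauss map. Since always $C_3(X,p)\subseteq C_4(X,p)=L_p$, coinciding linearly upgrades this to the equality $C_3(X,p)=L_p$: the tangent cone at each point is the \emph{full} plane $L_p$. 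Fixing $p=0$ and rotating so that $L_0=\R^d\times\{0\}$, let $\pi\colon\R^n\to\R^d$ be the horizontal projection and $\sigma$ the smooth slope chart identifying planes transverse to $\{0\}\times\R^{n-d}$ with linear maps $\R^d\to\R^{n-d}$.

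The core is the case $k=0$: show that near $0$ the set $X$ is the graph of a $C^{1,1}$ map $f\colon B\subset\R^d\to\R^{n-d}$. I would build the graph in three ingredients. (a) Continuity of $\T\nu$ makes every $L_p$ a graph of slope $<1$ over $\R^d$ for $p$ small, so on $\Reg{X}$ the projection $\pi$ is a submersion with uniformly bounded slope. (b) By o-minimal cell decomposition, $X$ near $0$ is a finite union of $C^1$ branches tangent to $L_0$; for two branches $f_1,f_2$ with $f_1(0)=f_2(0)=0$ the $C^{0,1}$ bound on $\T\nu$ applied to $(x,f_1(x))$ and $(x,f_2(x))$ gives an estimate $|Df_1(x)-Df_2(x)|\le C\,|f_1(x)-f_2(x)|$, and a Gronwall argument along rays forces $f_1\equiv f_2$, so the branch is unique. (c) The equality $C_3(X,0)=\R^d$ makes $X$ approach $0$ from every horizontal direction, so $\pi$ carries the branch onto a full ball $B$ (this is exactly where coinciding linearly enters; without it one admits half-graphs such as $\{(t,t^2):t\ge0\}$). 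Thus $X\cap U=\mathrm{graph}(f)$ with $f$ Lipschitz, and on $\Reg{X}$ one has $Df(x)=\sigma(L_{(x,f(x))})$, Lipschitz in $x$ because $\T\nu$ is. As $f$ is Lipschitz, hence absolutely continuous on lines, and its a.e.\ derivative extends continuously, integration along segments shows $f$ is everywhere differentiable with that Lipschitz derivative, so $f\in C^{1,1}$. For $k>0$, $\eta^{-1}\in C^{k,1}$ already gives $\eta^{-1}\in C^{0,1}$, so $X$ is $C^{1,1}$ by the case just proved, and Claim \ref{claim:regularity} then raises the regularity one step at a time up to $C^{k+1,1}$. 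The moreover part follows similarly: a $C^{\infty}$ diffeomorphism has $\eta^{-1}\in C^{k,1}$ for every $k$, so, the cone condition being in force, the first part yields $C^{k+1,1}$ regularity for all $k$, i.e.\ $X$ is $C^{\infty}$.

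The decisive difficulty is the graph structure of the core step, and within it ingredient (b): ruling out extra branches. The Gronwall-type argument genuinely needs Lipschitz—not merely H\"older—control of $\T\nu$, which is the phenomenon behind Example \ref{exam:holder_fails}; combining it cleanly with o-minimal finiteness, with the parity count of Remark \ref{transversal-cones}, and with the boundary-exclusion coming from coinciding linearly is the technical heart of the proof.
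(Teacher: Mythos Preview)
Your proposal follows essentially the same architecture as the paper: settle the forward direction, reduce the backward direction to $k=0$, derive the inequality $\|Df_i-Df_j\|\lesssim\|f_i-f_j\|$ from the Lipschitz control on the Gauss map, use it to exclude multiple sheets, upgrade to $C^{1,1}$, and then bootstrap via Claim~\ref{claim:regularity}. Two points of divergence are worth flagging. First, you invoke classical Gronwall along rays to force $f_1\equiv f_2$, whereas the paper restricts $g=f_i\circ\gamma-f_j\circ\gamma$ to a definable arc and uses Lemma~\ref{lem:estimate_derivative} (the o-minimal L'H\^opital) to contradict $|g'_\ell|\le C|g_\ell|$; these are interchangeable, and your version is arguably cleaner. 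Second, and more substantively, your step~(b) assumes two branches are defined on a common domain and satisfy $f_1(0)=f_2(0)=0$; the paper does not get this for free. It manufactures a meeting point by introducing the maximal-fibre set $S$, extending each Lipschitz $f_i$ to $\overline S$, and locating a boundary point of $S$ where two extensions agree, after first invoking \cite{KurdykaL-GN:2018} both for openness of $\pi|_U$ and for the characterisation of $C^1$-failure via non-injectivity of the orthogonal projection. Your cell-decomposition sketch would need this or an equivalent device to be made rigorous, since cells need not share a domain nor pass through $0$. Finally, the parity count of Remark~\ref{transversal-cones} plays no role in the proof of Theorem~\ref{c11_smooth} (the cone condition is a hypothesis there); it enters only in Theorem~\ref{thm:real_analytic_ck_smooth}, where it is used to \emph{establish} that $C_3$ and $C_4$ coincide linearly in the real-analytic case.
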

\begin{proof}
It is clear that if $X$ is a $C^{k+1,1}$ smooth submanifold, then $C_3(X)$ and $C_4(X)$ coincide linearly and $\eta\colon\mathcal{N}(X)\to X$ is a homeomorphism such that $\eta^{-1}$ is $C^{k,1}$. Moreover, if $X$ is a $C^{\infty}$ smooth submanifold, then $\eta\colon\mathcal{N}(X)\to X$ is a $C^{\infty}$ smooth diffeomorphism.

Reciprocally, assume that the mapping $\eta\colon\mathcal{N}(X)\to X$ is a homeomorphism such that $\eta^{-1}$ is $C^{k,1}$ smooth and $C_3(X)$ and $C_4(X)$ coincide linearly.

Firstly, we assume $k=0$. In this case, $\eta\colon\mathcal{N}(X)\to X$ is a homeomorphism that is locally bi-Lipschitz.

Since $\eta\colon\mathcal{N}(X)\to X$ is in particular a bijection, we obtain that $C_4(X,p)$ is a $d$-dimensional linear subspace for all $p\in X$.
Since $C_3(X)$ and $C_4(X)$ coincide linearly, then $C_3(X,p)$ is a $d$-dimensional linear subspace for all $p\in X$ and continuously varies in $p$. For simplicity, in this case, for each $p\in X$, we denote $C_3(X,p)$ by $T_pX$.

\begin{claim}\label{claim:Lip_implies_c1}
$X$ is a $C^{1}$ submanifold.
\end{claim}
\begin{proof}[Proof of Claim \ref{claim:Lip_implies_c1}] 
Suppose on the contrary that $X$ is not a $C^1$ submanifold. By \cite[Theorem 4.4]{KurdykaL-GN:2018}, there exists $p \in X$ such that there is no neighbourhood of $p$ in $X$ to which the restriction to $X$ of the orthogonal projection $\pi_p\colon \R^n\to T_xX$ is injective. We may assume that $p$ coincides with the origin $0$ and we choose the linear coordinates $(x,y)$ of $\R^n$ such that $T_p X=\{(x,y);y=0\}= \mathbb{R}^d\times \{0\}\cong \mathbb{R}^d$. With this identification, $\pi_p$ now becomes the orthogonal projection to the first $k$ coordinates $\pi\colon  \mathbb{R}^n \to  \mathbb{R}^d$. 

By \cite[Lemma 4.1]{KurdykaL-GN:2018}, there is an open neighbourhood $U$ of $0$ in $X$ such that $\pi|_ U$ is an open map. By shrinking $U$, if necessary, we assume that $\pi(U) = \mathbf B^{d}(0,r)$ and for every $q \in U$, $\delta(T_q X, T_0 X)<\frac{1}{2}$ and, in particular, $T_q X$ is not orthogonal to $T_0 X$. Since $X$ is locally closed, we may assume that $r$ was taken small enough so that $X\cap \overline{\mathbf B^{n}(0,r)}\cap X$ is a compact set and $\pi|_U^{-1}(0)=\{0\}$. 

Let $N=\sup\limits_{x\in \mathbf B^{d}(0,r)} \#\pi|_{U} ^{-1}(x)$ and $S= \{ x\in \mathbf B^{d}(0,r): \#\pi|_{U} ^{-1}(x)= N\}$, where $\#\pi|_{U} ^{-1}(x)$ denotes the cardinality of $\pi|_{U} ^{-1}(x)$.  By shrinking $r$, if necessary, we assume that $0\in \overline{S}$. 
Given $x_0\in S\cap\mathbf B^{d}(0,r/2)$, let $t=\sup \{s\leq r/2; \#\pi|_{U} ^{-1}(x)=N$ for all $x\in \mathbf B^{d}(x_0,s)\}$. We have that $S$ is an open set, $\pi_U^{-1}(\mathbf B^{d}(x_0,t))$ has exactly $N$ connected components, say $X_1,...., X_N$, and each $X_i$ is the graph of a $C^1$ definable mapping $f_i\colon \mathbf B^{d}(x_0,t)\to \R^{n-d}$. By the assumptions on the tangent cones, we also have that each $f_i$ has bounded derivative, and thus it is a Lipschitz mapping and has a Lipschitz extension $\bar f_i\colon\overline{\mathbf B^{d}(x_0,t)}\to \R^{n-d}$. Note that each $\bar f_i$ is also a definable mapping. 
Thus, there is $x\in \overline {\mathbf B^{d}(x_0,t)}$ such that $\|x-x_0\|=t$ and $\#\pi|_{U} ^{-1}(x)<N$. This implies that there are $i$ and $j$ such that $\bar f_i(x)=\bar f_j(x)$. 

Let $\lambda\geq 1$ be a number such that $\|Df_i\|\leq \lambda$ and $\|Df_j\|\leq \lambda$ on $S$.

There is a constant $\tilde K$ such that $\delta(T_{(z,f_i(z))}X_i,T_{(z,f_j(z))}X_j)\geq \tilde K\|(Df_i)_z-(Df_j)_z\|$ for all $z\in S$, where $\|(Df_i)_z-(Df_j)_z\|:=\sup \{\|(Df_i)_z(u)-(Df_j)_z(u)\|;u\in \R^d,\,\|u\|\leq 1\}$.

In fact, for a fixed $z\in S$, let $S_i=T_{(z,f_i(z))}X\cap \mathbb{S}^{n-1}$ and $S_j=T_{(z,f_j(z))}X\cap \mathbb{S}^{n-1}$. Then, we have
\begin{eqnarray*}
\delta(T_{(z,f_i(z))}X,T_{(z,f_j(z))}X)&=& d_H(S_i,S_j)\\
            &=&\max\{\sup \limits_{a\in S_i}d(a,S_j),\sup \limits_{b\in S_j}d(b,S_i)\},
\end{eqnarray*}
and thus for any $u\in \R^d\setminus\{0\}$, there is $v\in \R^d\setminus\{0\}$ such that
\begin{eqnarray*}
\delta(T_{(z,f_i(z))}X,T_{(z,f_j(z))}X)&\geq &\left \|\frac{(u,(Df_i)_z(u))}{\|(u,(Df_i)_z(u))\|}-\frac{(v,(Df_j)_z(v))}{\|(v,(Df_j)_z(v))\|}\right\|.
\end{eqnarray*}
Denoting $\tilde u=\frac{u}{\|(u,(Df_i)_z(u))\|}$ and $\tilde v=\frac{v}{\|(v,(Df_j)_z(v))\|}$, we have
\begin{eqnarray*}
\delta(T_{(z,f_i(z))}X,T_{(z,f_j(z))}X)&\geq & \|(\tilde u,(Df_i)_z(\tilde u))-(\tilde v,(Df_j)_z(\tilde v))\|\\
                &\geq & \frac{\sqrt{2}}{2}(\|\tilde u-\tilde v\|+\|(Df_i)_z(\tilde u)-(Df_j)_z(\tilde v)\|)\\
                &\geq & \frac{\sqrt{2}}{2}(\|\tilde u-\tilde v\|+\frac{1}{\lambda}\|(Df_i)_z(\tilde u)-(Df_j)_z(\tilde v)\|)\\
                &\geq & \frac{\sqrt{2}}{2}\|\tilde u-\tilde v\|+\frac{\sqrt{2}}{2\lambda}\|(Df_i)_z(\tilde u)-(Df_j)_z(\tilde u)\|\\
                & & -\frac{\sqrt{2}}{2\lambda}\|(Df_j)_z(\tilde u-\tilde v)\|\\
                &\geq & \frac{\sqrt{2}}{2\lambda}\|(Df_i)_z(\tilde u)-(Df_j)_z(\tilde u)\|.                
\end{eqnarray*}
Note that $\|\tilde u\|\geq \frac{1}{\sqrt{1+\lambda^2}}$. Therefore,
\begin{eqnarray}\label{eq:tg_derivative}
\delta(T_{(z,f_i(z))}X,T_{(z,f_j(z))}X)&\geq & \tilde K\|(Df_i)_z-(Df_j)_z\|,               
\end{eqnarray}
where $\tilde K=\frac{\sqrt{2}}{2\lambda\sqrt{1+\lambda^2}}$.

Let $\gamma\colon [0,\epsilon)\to \overline {\mathbf B^{d}(x_0,t)}$ be the arc given by $\gamma(t)=x+tu$, where $u=\frac{x_0-x}{\|x_0-x\|}$. We have $\gamma((0,\epsilon))\subset \mathbf B^{d}(x_0,t)$ and $\|\gamma'(t)\|= \|u\|=1$ for all $t\in [0,\epsilon)$. Let $\gamma_i(t)=(\gamma(t),f_i(\gamma(t)))$ and $ \gamma_j(t)=(\gamma(t),f_j(\gamma(t)))$. For simplicity, denote $g(t)=f_i\circ\gamma(t)-f_j\circ\gamma(t)$. 

By the inequality \ref{eq:tg_derivative},
$$
\delta(T_{\gamma_i(t)}X,T_{\gamma_j(t)}X)\geq  \tilde K\|(Df_i)_{\gamma(t)}(\gamma'(t))-(Df_j)_{\gamma(t)}(\gamma'(t))\| 
$$
for all $t\in (0,\epsilon)$. Then
$$
\delta(T_{\gamma_i(t)}X,T_{\gamma_j(t)}X)\geq  \tilde K\|g'(t)\| 
$$
for all $t\in [0,\epsilon)$.

Thus, since $\eta$ is a local bi-Lipschitz homeomorphism, there is a constant $M>0$ such that
\begin{eqnarray*}
\|g(t)\|=\|\gamma_i(t)-\gamma_j(t)\|&=& \|f_i\circ\gamma(t)-f_i\circ\gamma(t)\|\\
              &\geq & \frac{1}{M}\delta_X(\eta^{-1}(\gamma_i(t)),\eta^{-1}(\gamma_j(t)))\\
              &=& \frac{1}{M}\max \{ \|\gamma_i(t)-\gamma_j(t)\|, \delta(T_{\gamma_i(t)}X_i,T_{\gamma_j(t)}X_j)\}\\
              &\geq & \frac{1}{M}\max \{\|\gamma_i(t)-\gamma_j(t)\|, \tilde K \|g'(t)\|\}\\
              &\geq & \frac{\tilde K}{M} \|g'(t)\|.
\end{eqnarray*}
There is $\ell$ such that $\|g(t)\|\leq \sqrt{n-d} |g_{\ell}(t)|$, and thus
$$
|g_{\ell}'(t)|\leq \sqrt{n-d}\frac{M}{\tilde K}  |g_{\ell}(t)|,
$$

Since $g_{\ell}$ is a definable function and $g_{\ell}(0)=0$, by exchanging the indices $i$ and $j$, if necessary, we may assume that $g_{\ell}(t)>0$ and $g'_{\ell}(t)>0$ for all small enough $t>0$.
By Lemma \ref{lem:estimate_derivative}, $\lim\limits_{t\to 0^+}\frac{g_{\ell}(t)}{g'_{\ell}(t)}=0$, which gives a contradiction.

Therefore, $X$ is a $C^{1}$ submanifold.
\end{proof}

\begin{claim}\label{claim:c1_implies_c11}
$X$ is a $C^{1,1}$ submanifold.
\end{claim}
\begin{proof}[Proof of Claim \ref{claim:c1_implies_c11}]
This follows using the Pl\"ucker embedding, but we present a direct proof here.

Since we are dealing with a local problem, we may assume that $X$ is the graph of a $C^1$ mapping $f\colon \mathbf B^{d}(0,r)\to \R^{n-d}$, for some $r>0$. By shrinking $r>0$, if necessary, we may assume that $f$ is $\lambda$-Lipschitz. We assume that $\lambda\geq 1$.
Then for $S_1=T_{(z,f(z))}X\cap \mathbb{S}^{n-1}$ and $S_2=T_{(w,f(w))}X\cap \mathbb{S}^{n-1}$, we have
\begin{eqnarray*}
\delta(T_{(z,f(z))}X,T_{(w,f(w))}X)&=& d_H(S_1,S_2)\\
            &=&\max\{\sup \limits_{a\in S_1}d(a,S_2),\sup \limits_{b\in S_2}d(b,S_1)\},
\end{eqnarray*}
and thus for any $u\in \R^d\setminus\{0\}$, there is $v\in \R^d\setminus\{0\}$ such that
\begin{eqnarray*}
\delta(T_{(z,f(z))}X,T_{(w,f(w))}X)&\geq &\left \|\frac{(u,Df_z(u))}{\|(u,Df_z(u))\|}-\frac{(v,Df_w(v))}{\|(v,Df_w(v))\|}\right\|.
\end{eqnarray*}
Denoting $\tilde u=\frac{u}{\|(u,Df_z(u))\|}$ and $\tilde v=\frac{v}{\|(v,Df_w(v))\|}$, we have
\begin{eqnarray*}
\delta(T_{(z,f(z))}X,T_{(w,f(w))}X)&\geq & \|(\tilde u,Df_z(\tilde u))-(\tilde v,Df_w(\tilde v))\|\\
                &\geq & \frac{\sqrt{2}}{2}(\|\tilde u-\tilde v\|+\|Df_z(\tilde u)-Df_w(\tilde v)\|)\\
                &\geq & \frac{\sqrt{2}}{2}(\|\tilde u-\tilde v\|+\frac{1}{\lambda}\|Df_z(\tilde u)-Df_w(\tilde v)\|)\\
                &\geq & \frac{\sqrt{2}}{2}(\|\tilde u-\tilde v\|+\frac{1}{\lambda}(\|Df_z(\tilde u)-Df_w(\tilde u)\|-\|Df_w(\tilde u-\tilde v)\|))\\
                &\geq & \frac{\sqrt{2}}{2\lambda}\|Df_z(\tilde u)-Df_w(\tilde u)\|.                
\end{eqnarray*}
Note that $\|\tilde u\|\geq \frac{1}{\sqrt{1+\lambda^2}}$. Therefore,
\begin{eqnarray*}
\delta(T_{(z,f(z))}X,T_{(w,f(w))}X)&\geq & \frac{\sqrt{2}}{2\lambda\sqrt{1+\lambda^2}}\|Df_z-Df_w\|,                
\end{eqnarray*}
where $\|Df_z-Df_w\|=\sup \{\|Df_z(u)-Df_w(u)\|;u\in \R^d,\,\|u\|\leq 1\}$.
By hypothesis, there is a constant $K>0$ such that $\delta(T_{(z,f(z))}X,T_{(w,f(w))}X)\leq K\|(z,f(z))-(w,f(w))\|$. Then,

\begin{eqnarray*}
\|Df_{z}-Df_{w}\|&\leq& K\lambda\sqrt{2+2\lambda^2}\|(z,f(z))-(w,f(w))\|\\
                 &\leq& K\lambda\sqrt{2+2\lambda^2}(\|z-w\|+\|f(z)-f(w)\|)\\
                 &\leq& K\lambda\sqrt{2+2\lambda^2}(1+\lambda)\|z-w\|.
\end{eqnarray*}
Therefore, $Df$ is Lipschitz, which shows that $X$ is a $C^{1,1}$ submanifold.
\end{proof}

Now, we assume that $k\geq 1$. By the first part of this proof, $X$ is a $C^{1,1}$ submanifold. 

Since our problem is a local problem, we may assume that $X$ is the graph of a $C^{1,1}$ smooth mapping $h\colon B\to \mathbb{R}^{n-d}$ for some open ball $B\subset \mathbb{R}^d$ and such that $h$ is a definable mapping. Let us write $h=(h_1,...,h_d)$.

We are going to show that $h$ is $C^{k+1,1}$. This follows from the following claim:
\begin{claim}\label{claim:regularity}
If $h$ is $C^{s,}$ smooth for some $1\leq s\leq k$, then $h$ is $C^{s+1,1}$ smooth.
\end{claim}
\begin{proof}[Proof of Claim \ref{claim:regularity}]
Assume that $h$ is $C^{s,1}$ smooth for some $1\leq s\leq k$. Then $X$ is a $C^{s,1}$ submanifold. Since $\eta\colon\mathcal{N}(X)\to X$ is a $C^{k,1}$ smooth diffeomorphism and $k\geq s$, then $\mathcal{N}(X)$ is a $C^{s,1}$ submanifold as well, and thus $\nu \colon X\to Gr_{\mathbb{R}}(d,n)$ is $C^{s,1}$ smooth. By using the Pl\"ucker embedding $p\colon Gr_{\mathbb{R}}(d,n)\to \mathbb{P}(\bigwedge^d\R^n)$, where $\bigwedge^d\R^n$ is the $d$-th exterior power of $\R^n$ and $\mathbb{P}(\bigwedge^d\R^n)$ is the projectivization of $\bigwedge^d\R^n$, we see that $1$ and each partial derivative $\frac{\partial h_i}{\partial x_j}$ are coordinates of $(id\times p)\circ \eta^{-1}(x,h(x))$, and thus $\frac{\partial h_i}{\partial x_j}$ is $C^{s,1}$ smooth. Therefore, $h$ is $C^{s+1,1}$ smooth.
\end{proof}
Since $h$ is $C^{1,1}$ smooth, by using recursively Claim \ref{claim:regularity}, $h$ is $C^{k+1,1}$ smooth.

Moreover, if $\eta\colon\mathcal{N}(X)\to X$ is a $C^{\infty}$ diffeomorphism, then by Claim \ref{claim:regularity}, $h$ is $C^{k,1}$ smooth for all positive integers $k$. Therefore, $h$ is $C^{\infty}$ smooth, and thus $X$ is a $C^{\infty}$ submanifold.
\end{proof}

Note that if $X\subset \R^n$ is a locally closed set that is pure $d$-dimensional and locally definable in a polynomially bounded o-minimal structure on $\R$, then $\eta^{-1}$ is locally definable in a polynomially bounded o-minimal structure on $\R$. Thus, if $\eta^{-1}$ is $C^{k,\alpha}$ smooth for all $\alpha\in (0,1)$, then $\eta^{-1}$ is $C^{k,1}$ smooth. Then, we have the following direct consequence.
\begin{corollary}\label{cor:c11_smooth_holder}
Let $X\subset \R^n$ be a locally closed set that is pure $d$-dimensional and locally definable in a polynomially bounded o-minimal structure on $\R$. Then, for a fixed nonnegative integer number $k$, $X$ is a $C^{k+1,1}$ smooth submanifold if and only if $C_3(X)$ and $C_4(X)$ coincide linearly and the mapping $\eta\colon\mathcal{N}(X)\to X$ is a homeomorphism such that $\eta^{-1}$ is $C^{k,\alpha}$ smooth for all $\alpha\in (0,1)$.
\end{corollary}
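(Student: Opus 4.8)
The plan is to obtain this statement as a direct consequence of Theorem \ref{c11_smooth}, by showing that in the polynomially bounded setting the hypothesis ``$\eta^{-1}$ is $C^{k,\alpha}$ for all $\alpha\in(0,1)$'' is equivalent to ``$\eta^{-1}$ is $C^{k,1}$''. The forward implication is immediate from Theorem \ref{c11_smooth}: if $X$ is a $C^{k+1,1}$ submanifold, then $C_3(X)$ and $C_4(X)$ coincide linearly, $\eta$ is a homeomorphism, and $\eta^{-1}$ is $C^{k,1}$; since the smoothness in Definition \ref{defi:regulaties_two} is a local notion and a Lipschitz map restricted to a bounded set is $\alpha$-H\"older for every $\alpha\in(0,1)$, the $k$-th derivative $D^k\eta^{-1}$ is locally $\alpha$-H\"older for all such $\alpha$, i.e. $\eta^{-1}$ is $C^{k,\alpha}$ for all $\alpha\in(0,1)$.

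For the converse, the first step is to record that $\eta^{-1}$, and hence each of its derivatives up to order $k$, is locally definable in the same polynomially bounded structure. This holds because $X$ is locally definable, so its smooth locus and the Gauss map $\nu$ on it are definable, whence the graph of $\nu$ and its closure $\mathcal{N}(X)$ are locally definable; the homeomorphism $\eta$ then has definable graph, so $\eta^{-1}$ does too, and derivatives of definable maps are again definable. Thus $g:=D^k\eta^{-1}$ is a continuous, locally definable map, and it remains to upgrade ``$g$ is $\alpha$-H\"older for all $\alpha<1$'' to ``$g$ is Lipschitz''.

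To carry this out I would work locally and pass to the modulus of continuity $\omega(t)=\sup\{\|g(x)-g(y)\|:\|x-y\|\le t\}$, which is a definable, non-decreasing one-variable function with $\omega(0^+)=0$. If $\omega\equiv 0$ then $g$ is locally constant and there is nothing to prove; otherwise polynomial boundedness guarantees a leading asymptotic $\omega(t)=c\,t^{r}(1+o(1))$ as $t\to 0^+$, with $c>0$ and a well-defined exponent $r>0$. Being $\alpha$-H\"older is equivalent to $\omega(t)\le C t^{\alpha}$ near $0$, which, in view of this asymptotic, forces $r\ge\alpha$; requiring it for every $\alpha<1$ gives $r\ge 1$, and therefore $\omega(t)\le C t$ near $0$. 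Hence $g$ is locally Lipschitz, $\eta^{-1}$ is $C^{k,1}$, and Theorem \ref{c11_smooth} applies verbatim to conclude that $X$ is a $C^{k+1,1}$ submanifold.

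I expect the monomialization step to be the crux of the argument: one must justify that a definable modulus of continuity in a polynomially bounded structure has a genuine power-law leading term, with no intermediate correction interpolating between all the H\"older classes and the Lipschitz class. This is exactly where axiom (6) is used essentially and not cosmetically; without polynomial boundedness one can produce a modulus of the form $\omega(t)\sim t|\log t|$ lying strictly between $C^{0,\alpha}$ for every $\alpha<1$ and $C^{0,1}$, and Example \ref{exam:holder_fails} exhibits the corresponding failure of the implication, confirming that the hypothesis of polynomial boundedness cannot be dropped.
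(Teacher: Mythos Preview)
Your proposal is correct and follows the same route as the paper: the paper's entire argument is the short paragraph preceding the corollary, namely that $\eta^{-1}$ is locally definable in the given polynomially bounded structure, so ``$C^{k,\alpha}$ for all $\alpha\in(0,1)$'' upgrades to ``$C^{k,1}$'', after which Theorem~\ref{c11_smooth} applies. You supply the justification the paper leaves implicit---the power-law asymptotic $\omega(t)\sim ct^{r}$ for a definable modulus of continuity in a polynomially bounded structure, forcing $r\ge 1$---and correctly identify Example~\ref{exam:holder_fails} as the obstruction when polynomial boundedness is dropped.
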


\begin{remark}
Let $X\subset \R^n$ be a locally closed set that is pure $d$-dimensional and locally definable in an o-minimal structure on $\R$. 
If $\eta\colon \mathcal{N}(X)\to X$ is a homeomorphism such that $\eta^{-1}$ is $C^{0,log-Lip}$ smooth, then $X$ is a $C^{1}$ smooth submanifold. Indeed, if $X$ is not $C^1$ smooth submanifold, let $f_i$, $f_j$, $\gamma_i$, $\gamma_j$ and $g$ as in the proof of Theorem \ref{c11_smooth}. Let us write $g(t)=(g_1(t),...,g_{n-d}(t))$. So, we can prove that there is a constant $C$ such that 
$$
|g_{\ell}'(t)|\leq \|g'(t)\|\leq -C\|g(t)\|\log(\|g(t)\|),
$$
for all $\ell \in \{1,...,n-d\}$. There is $\ell$ such that $\|g(t)\|\leq \sqrt{n-d} |g_{\ell}(t)|$, and thus
$$
|g_{\ell}'(t)|\leq -C\sqrt{n-d} |g_{\ell}(t)|\log(|g_{\ell}(t)|),
$$
Since $g_{\ell}$ is a definable function and $g_{\ell}(0)=0$, by exchanging the indices $i$ and $j$, if necessary, we may assume that $g_{\ell}(t)>0$ and $g'_{\ell}(t)>0$ for all small enough $t>0$.
By Lemma \ref{lem:estimate_derivative_log}, $\lim\limits_{t\to 0^+}\frac{g'_{\ell}(t)}{g_{\ell}(t)\log(g_{\ell}(t))}=-\infty$, which gives a contradiction. Therefore, $X$ is a $C^{1}$ smooth submanifold.
\end{remark}
\begin{remark}
Assume that $X$ is the graph of a $C^{1}$ smooth mapping $h\colon B\to \mathbb{R}^{n-d}$ for some open ball $B\subset \mathbb{R}^d$, but it is not necessarily definable. Note that it follows in the same way as in Claim \ref{claim:regularity} that, for a fixed integer number $k\geq 1$ and $\alpha\in [0,1]\cup \{\gamma-log-Lip;$ $\gamma\geq 0\}$, $h$ is $C^{k+1,\alpha}$ smooth if, and only if, $\eta\colon\mathcal{N}(X)\to X$ is a $C^{k,\alpha}$ smooth diffeomorphism. Thus, for a fixed integer number $k\geq 1$ and $\alpha\in [0,1]$, the mapping $\eta\colon\mathcal{N}(X)\to X$ is a homeomorphism such that $\eta^{-1}$ is $C^{k,\alpha}$ smooth if and only if $X$ is a $C^{k+1,\alpha}$ smooth submanifold.
\end{remark}

Thus, using the two remarks above, we can proceed similarly as in the proof of Theorem \ref{c11_smooth} to obtain the following log-Lipschitz version of this theorem.
\begin{theorem}\label{c11_smooth_log-Lip}
Let $X\subset \R^n$ be a locally closed set that is pure $d$-dimensional and locally definable in an o-minimal structure on $\R$. 
Suppose that $C_3(X)$ and $C_4(X)$ coincide linearly. Then, for a fixed nonnegative integer $k$, the mapping $\eta\colon\mathcal{N}(X)\to X$ is a homeomorphism such that $\eta^{-1}$ is $C^{k,log-Lip}$ smooth if and only if $X$ is a $C^{k+1,log-Lip}$ smooth submanifold.
\end{theorem}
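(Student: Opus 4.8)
The plan is to run the argument of Theorem~\ref{c11_smooth} essentially verbatim, replacing the Lipschitz modulus throughout by the log-Lipschitz one and using the two remarks preceding the statement as the main building blocks. The \emph{if} direction is immediate and does not even use the hypothesis on the tangent cones: if $X$ is a $C^{k+1,log-Lip}$ submanifold, then locally it is the graph of a $C^{k+1,log-Lip}$ map $h$, so each $\partial h_i/\partial x_j$ is $C^{k,log-Lip}$; since the Pl\"ucker coordinates of the Gauss map $\nu$ are polynomials (the $d\times d$ minors of the Jacobian) in the $\partial h_i/\partial x_j$, the map $\nu$, hence $\eta^{-1}(x,h(x))=((x,h(x)),\nu(x))$, is $C^{k,log-Lip}$, while $\eta$ is smooth as the restriction of a linear projection.

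For the converse, assume $\eta$ is a homeomorphism with $\eta^{-1}$ being $C^{k,log-Lip}$ and that $C_3(X)$ and $C_4(X)$ coincide linearly. I would first show that $X$ is a $C^1$ submanifold. Exactly as at the start of the proof of Theorem~\ref{c11_smooth}, bijectivity of $\eta$ forces $C_4(X,p)$ to be a $d$-dimensional linear subspace at every $p$, and the linear-coincidence hypothesis then makes $C_3(X,p)=T_pX$ a $d$-dimensional subspace varying continuously in $p$. Since $C^{k,log-Lip}$ regularity of $\eta^{-1}$ entails $C^{0,log-Lip}$ regularity (trivially for $k=0$, and for $k\geq 1$ because a $C^1$ map is locally Lipschitz and Lipschitz maps are log-Lipschitz), the first of the two remarks preceding the theorem applies: its argument reproduces Claim~\ref{claim:Lip_implies_c1} with the sharper differential inequality $|g_\ell'(t)|\leq -C\|g(t)\|\log\|g(t)\|$ and Lemma~\ref{lem:estimate_derivative_log} in place of Lemma~\ref{lem:estimate_derivative}, and yields that $X$ is $C^1$.

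In the second stage I would bootstrap the regularity up to $C^{k+1,log-Lip}$ as in the second remark. Working locally, write $X$ as the graph of a $C^1$ definable map $h\colon B\to\R^{n-d}$, so that $\nu$ is well defined and continuous and $\eta^{-1}$ being $C^{k,log-Lip}$ is equivalent to $\nu$ being $C^{k,log-Lip}$. Post-composing with the analytic Pl\"ucker embedding $p\colon Gr_{\R}(d,n)\to\mathbb{P}(\bigwedge^d\R^n)$ and passing to the affine chart where the $e_1\wedge\cdots\wedge e_d$-coordinate equals $1$, each $\partial h_i/\partial x_j$ appears as a coordinate of $(\id\times p)\circ\eta^{-1}$; since $p$ and the chart map are analytic, composition preserves the log-Lipschitz modulus, so every $\partial h_i/\partial x_j$ is $C^{k,log-Lip}$, whence $h$, and therefore $X$, is $C^{k+1,log-Lip}$.

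The main obstacle is concentrated in the first stage, precisely where the log-Lipschitz case departs from the Lipschitz case of Theorem~\ref{c11_smooth}: there the base regularity $C^{1,1}$ was obtained from $C^1$ by the separate direct estimate of Claim~\ref{claim:c1_implies_c11}, whereas here one must instead control the differential inequality carrying the additional factor $\log\|g(t)\|$ and verify that definability still forces a contradiction, which is exactly the role of Lemma~\ref{lem:estimate_derivative_log}. Once $X$ is known to be $C^1$, the Pl\"ucker step is insensitive to the choice of modulus of continuity and carries the log-Lipschitz regularity through the bootstrap with no extra work.
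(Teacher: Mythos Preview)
Your proposal is correct and follows essentially the same approach as the paper, which itself merely says to combine the two remarks preceding the statement with the proof of Theorem~\ref{c11_smooth}. One small point worth making explicit: for $k\geq 1$ the Pl\"ucker step must be read iteratively (as in Claim~\ref{claim:regularity}), since $\partial h_i/\partial x_j(x)=\psi_{ij}(x,h(x))$ is a composition through the graph map $x\mapsto(x,h(x))$, whose regularity only improves one step at a time; your use of the word ``bootstrap'' and reference to the second remark indicate you have this in mind.
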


The following examples show that the hypotheses of Theorem \ref{c11_smooth_log-Lip} are sharp.
\begin{example}
Let $C=\{(x,y)\in \C^2;x^2=y^3\}$. We have that $\eta\colon \mathcal{N}(C)\to C$ is a homeomorphism such that $\eta^{-1}$ is $C^{0,\alpha}$ smooth for some $\alpha\in (0,1)$. However, $C$ is not a $C^1$ submanifold.
\end{example}

In fact, we can obtain examples for any $\alpha\in (0,1)$.
\begin{example}
For a fixed $\alpha\in (0,1)$, let $k$ be a positive integer such that $\alpha<\frac{k}{k+1}$ and let $Y_k=\{(x,y)\in \R^2;x^4=y^{4k+2}\}$. We have that $C_3(Y_k,p)$ is a line for any $p\in Y_k$ and, moreover, $\eta\colon \mathcal{N}(Y_k)\to Y_k$ is a homeomorphism such that $\eta^{-1}$ is $C^{0,\alpha}$ smooth, but $X$ is not a $C^1$ submanifold.
\end{example}

We can even obtain examples where the Nash transformation is a bi-$\alpha$-H\"older homeomorphism for all $\alpha\in (0,1)$. 

\begin{example}\label{exam:holder_fails}
Let $f_{\pm}\colon\R\to \R$ be the functions defined as follows
$$
f_{\pm}(x)=\left\{\begin{array}{ll}
     \pm x^3e^{-\frac{1}{x}}&\text{if } x>0, \\
     0& \text{if } x\leq 0.
\end{array}\right.
$$
Let $X$ be the union of the graphs of $f_{-}$ and $f_{+}$ (see Figure \ref{fig:thm_fails_holder}). Then $C_3(X)$ and $C_4(X)$ coincide linearly, and $\eta\colon \mathcal{N}(X)\to X$ is a homeomorphism such that $\eta^{-1}$ is $C^{0,\alpha}$ smooth for all $\alpha\in (0,1)$, but $X$ is not even a topological manifold.
\end{example}

\begin{figure}[H]
\centering
\begin{tikzpicture}[scale=2]
  \draw[->] (-1.2,0) -- (2.2,0) node[right] {$x$};
  \draw[->] (0.35,-1.2) -- (0.35,1.2) node[above] {$y$};

  \draw[domain=0.35:1.35, smooth, variable=\x, red, thick] 
    plot ({\x}, {\x*\x*\x*exp(-1/\x)}) node[right] {$Graph(f_+)$};

  \draw[domain=0.35:1.35, smooth, variable=\x, red, thick] 
    plot ({\x}, {-\x*\x*\x*exp(-1/\x)}) node[right] {$Graph(f_-)$};

  \draw[domain=-1.2:0.35, smooth, red, thick] plot (\x, 0);

  \node[text=red] at (0.7,0.3) {$X$};
\end{tikzpicture}
\caption{Theorem \ref{c11_smooth_log-Lip} fails for H\"older regularity}\label{fig:thm_fails_holder}
\end{figure}

In the same way, for any $\gamma>1$,
we can obtain examples where the Nash transformation is a homeomorphism such that its inverse is $\gamma$-log-Lipschitz. 

\begin{example}\label{exam:gamma-log-Lip_fails}
Let $m$ be a positive integer number. Let $h_{\pm}\colon \R\to \R$ be the functions defined as follows: $h_{\pm}(x)=\pm x^3e^{-\frac{1}{x^m}}$.
Let $A$ be the union of the origin with the graphs of $h_{-}$ and $h_{+}$. Then $C_3(A)$ and $C_4(A)$ coincide linearly, and $\eta\colon \mathcal{N}(A)\to A$ is a homeomorphism such that $\eta^{-1}$ is $\gamma$-log-Lipschitz for any $\gamma>1+\frac{1}{m}$, but $X$ is not even a topological manifold.
\end{example}
We can even require that the set be a topological manifold. 
\begin{example}\label{exam:gamma-log-Lip_fails_top_manifold}
Let $m$ be a positive integer number. Let $g_{\pm}\colon (0,+\infty)\to \R$ be the functions defined as follows: $g_{\pm}(x)=\pm x^3e^{-\frac{1}{x^m}}$.
Let $Y$ be the union of the origin with the graphs of $g_{-}$ and $g_{+}$. Then $C_3(Y)$ and $C_4(Y)$ coincide linearly, and $\eta\colon \mathcal{N}(Y)\to Y$ is a homeomorphism such that $\eta^{-1}$ is $\gamma$-log-Lipschitz for any $\gamma>1+\frac{1}{m}$. Note also that $Y$ is a topological manifold, but it is not a $C^1$ submanifold.
\end{example}
\begin{figure}[H]
\centering
\begin{tikzpicture}[scale=2]
  \draw[->] (-1.2,0) -- (2.2,0) node[right] {$x$};
  \draw[->] (0.45,-1.2) -- (0.45,1.2) node[above] {$y$};

  \draw[domain=0.45:1.35, smooth, variable=\x, blue, thick] 
    plot ({\x}, {\x*\x*\x*exp(-1/\x*1/\x)}) node[right] {$Graph(g_+)$};

  \draw[domain=0.45:1.35, smooth, variable=\x, blue, thick] 
    plot ({\x}, {-\x*\x*\x*exp(-1/\x*1/\x)}) node[right] {$Graph(g_-)$};

\draw[smooth, red, thick] plot (0, 0);
  \node[text=blue] at (0.7,0.3) {$Y$};
\end{tikzpicture}
\caption{Theorem \ref{c11_smooth_log-Lip} fails for $\gamma$-log-Lipschitz regularity, $\gamma>1$}\label{fig:thm_fails_log-Lip}
\end{figure}

The condition that $C_3(X)$ and $C_4(X)$ coincide linearly cannot be dropped.
\begin{example}
Let $Z=\{(x,y)\in \R^2; y^2-x^3\leq 0\}$. We have that $\eta\colon \mathcal{N}(Z)\to Z$ is a $C^{\infty}$ diffeomorphism. Indeed, $\eta^{-1}(x)=(x,\R^2)$ for all $x\in Z$. However, $Z$ is not a $C^1$ submanifold. 
\end{example}

\subsection{Metric version of the Theorem of Nobile}

\begin{theorem}\label{thm:metric_nobile}
Let $X\subset \mathbb{C}^n$ be a pure dimensional complex analytic set. Then, $X$ is analytically smooth if and only if $\eta\colon \mathcal{N}(X)\to X$ is a homeomorphism that is locally bi-Lipschitz.
\end{theorem}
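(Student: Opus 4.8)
The plan is to reduce the complex statement to the real definable result Theorem~\ref{c11_smooth} (in the case $k=0$), applied to $X$ viewed as a real analytic set, and then to promote the resulting real smoothness to genuine complex smoothness by a multiplicity computation. The forward implication is immediate: if $X$ is analytically smooth, then the complex Gauss map $\nu_\mathbb{C}\colon X\to Gr_\mathbb{C}(d,n)$ is analytic on all of $X$, so $\eta$ is the inverse of the analytic section $x\mapsto(x,\nu_\mathbb{C}(x))$, and an analytic diffeomorphism is in particular a locally bi-Lipschitz homeomorphism.

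For the converse I would regard $X\subset\mathbb{C}^n=\R^{2n}$ as a pure $2d$-dimensional real analytic set; it is locally closed and locally definable in the o-minimal structure $\R_{\mathrm{an}}$ of globally subanalytic sets. Let $j\colon Gr_\mathbb{C}(d,n)\hookrightarrow Gr_\R(2d,2n)$ be the real-analytic embedding sending a complex $d$-plane to its underlying real $2d$-plane. As a smooth embedding of compact manifolds, $j$ is bi-Lipschitz onto its (closed) image for the distance $\delta$, and over the complex regular locus $X\setminus\Sing{X}$ the real Gauss map equals $j\circ\nu_\mathbb{C}$. Hence the image under $\mathrm{id}_X\times j$ of the graph of $\nu_\mathbb{C}$ is the graph of the real Gauss map over a dense subset of $X$, so taking closures identifies $\mathcal N(X)$ with the real Nash transform $\mathcal N_\R(X)\subset X\times Gr_\R(2d,2n)$, intertwining the projections to $X$. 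Since all natural metrics on the compact Grassmannian are bi-Lipschitz equivalent, this identification shows that the real Nash map $\eta_\R$ is a homeomorphism whose inverse is locally Lipschitz, so the hypotheses on $\eta$ in Theorem~\ref{c11_smooth} hold with $k=0$.

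It remains to verify that $C_3(X)$ and $C_4(X)$ coincide linearly. Since $\eta_\R$ is a bijection, for every $p\in X$ the set $C_4(X,p)$ is a single real $2d$-plane $T$; being a limit of complex tangent planes and $j(Gr_\mathbb{C}(d,n))$ being closed, $T$ is the realification of a complex $d$-plane $\widetilde T$. The tangent cone $C_3(X,p)$ is a complex cone of pure complex dimension $d$ with $C_3(X,p)\subset C_4(X,p)=\widetilde T$; as $\widetilde T$ is irreducible of the same dimension, $C_3(X,p)=\widetilde T=C_4(X,p)$. Thus Theorem~\ref{c11_smooth} applies and yields that $X$ is a real $C^{1,1}$ submanifold, in particular a real $C^1$ submanifold.

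Finally I would upgrade real $C^1$-smoothness to complex smoothness. Fix $p\in X$; real $C^1$-regularity gives $C(X,p)=T_pX=\widetilde T$, a complex $d$-plane, and near $p$ the set $X$ is a $C^1$ graph over $\widetilde T$ with vanishing first-order term. Choosing the complex-linear projection $\pi$ whose kernel is the Hermitian complement $\widetilde{T}^{\perp}$ (transverse to $\widetilde T=C(X,p)$), the inverse function theorem shows that $\pi|_X$ is injective near $p$, so the generic fibre is a single point and $m(X,p)=1$. By the classical criterion that a complex analytic germ of multiplicity one is smooth, $X$ is analytically smooth at $p$, and hence everywhere. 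The step I expect to be the crux is exactly this passage from real to complex smoothness: the o-minimal machinery behind Theorem~\ref{c11_smooth} detects only real regularity, and it is here that the complex structure must be used essentially (through the fact that the tangent cone of a complex set is a complex cone and through the multiplicity-one criterion). The bi-Lipschitz identification of the complex and real Nash transforms in the second step is the other point demanding care, since it is what allows the chosen metric $\delta$ and the hypotheses of Theorem~\ref{c11_smooth} to be transported faithfully.
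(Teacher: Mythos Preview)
Your proposal is correct and follows the same overall scheme as the paper: reduce to Theorem~\ref{c11_smooth} with $k=0$ to get real $C^{1,1}$-smoothness, then upgrade this to complex analytic smoothness. There are two noteworthy differences in execution. First, you explicitly verify that $C_3(X)$ and $C_4(X)$ coincide linearly (using that the tangent cone of a complex analytic set is a pure $d$-dimensional complex cone contained in the single complex $d$-plane $C_4(X,p)$), and you spell out the bi-Lipschitz identification of the complex Nash transform in $X\times Gr_\mathbb{C}(d,n)$ with the real one in $X\times Gr_\R(2d,2n)$; the paper's proof simply invokes Theorem~\ref{c11_smooth} directly, leaving both points implicit. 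Second, for the upgrade from real $C^{1,1}$ to complex smooth, the paper quotes an external result (Milnor's remark in \cite[pp.~13--14]{Milnor:1968}, or equivalently the Lipschitz Regularity Theorem \cite[Theorem~4.2]{Sampaio:2016}), whereas you give a direct multiplicity argument: the $C^1$ graph structure over the complex tangent plane forces a generic transverse projection to be locally injective, hence $m(X,p)=1$, hence $X$ is smooth. Your route is slightly more self-contained, while the paper's is shorter by outsourcing the final step; both are valid.
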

\begin{proof}
It is clear that if $X$ is smooth, then $\eta\colon \mathcal{N}(X)\to X$ is a homeomorphism that is locally bi-Lipschitz.

Reciprocally, assume that $\eta\colon \mathcal{N}(X)\to X$ is a homeomorphism that is locally bi-Lipschitz. By Theorem \ref{c11_smooth}, $X$ is a $C^{1,1}$ submanifold. By a result proved by Milnor in \cite[Remark, pp. 13-14]{Milnor:1968}, $X$ is analytically smooth. This also follows from the Lipschitz Regularity Theorem \cite[Theorem 4.2]{Sampaio:2016} (see also \cite{Sampaio:2015}).
\end{proof}

Note that the condition that $\eta\colon \mathcal{N}(X)\to X$ is locally bi-Lipschitz cannot be dropped. Indeed, if $X=\{(x,y)\in \C^2; x^2=y^3\}$, then $\eta\colon \mathcal{N}(X)\to X$ is a homeomorphism, but $X$ is not smooth at $0$.

Since any complex analytic set is locally definable in a polynomially bounded o-minimal structure on $\R$, by Corollary \ref{cor:c11_smooth_holder} and Theorem \ref{thm:metric_nobile}, we obtain the following.
\begin{corollary}\label{cor:metric_nobile_hoelder}
Let $X\subset \mathbb{C}^n$ be a pure dimensional complex analytic set. Then, $X$ is analytically smooth if and only if $\eta\colon\mathcal{N}(X)\to X$ is a homeomorphism such that $\eta^{-1}$ is $\alpha$-H\"older for all $\alpha\in (0,1)$.
\end{corollary}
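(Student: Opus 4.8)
The plan is to handle the two implications separately, deriving the easy one directly from the Theorem of Nobile and the substantive one by applying Corollary \ref{cor:c11_smooth_holder} together with the smoothness step of Theorem \ref{thm:metric_nobile}. First I would treat the forward direction: if $X$ is analytically smooth, then by the Theorem of Nobile (Theorem \ref{thm:Nobile}) the Nash mapping $\eta$ is an analytic isomorphism, so $\eta^{-1}$ is complex analytic, hence $C^{\infty}$ and in particular locally Lipschitz; thus $\eta^{-1}$ is $\alpha$-H\"older for every $\alpha\in(0,1)$.

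For the reverse direction, suppose $\eta$ is a homeomorphism with $\eta^{-1}$ being $\alpha$-H\"older for all $\alpha\in(0,1)$. The governing idea is to view $\C^n\cong\R^{2n}$ and regard $X$ as a pure $2d$-dimensional real analytic set, where $d=\dim_{\C}X$. As a real analytic set $X$ is subanalytic, hence locally definable in the polynomially bounded o-minimal structure $\R_{\mathrm{an}}$, and the complex Nash transform is identified with the real one through the closed (isometric) embedding $Gr_{\C}(d,n)\hookrightarrow Gr_{\R}(2d,2n)$. Since $\eta^{-1}$ is then locally definable in a polynomially bounded structure and is $C^{0,\alpha}$ smooth for all $\alpha\in(0,1)$, Corollary \ref{cor:c11_smooth_holder} (with $k=0$) is poised to apply, provided I also check that $C_3(X)$ and $C_4(X)$ coincide linearly.

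The verification of that last hypothesis is where the complex structure does its work. Because $\eta$ is a homeomorphism, each fibre $\eta^{-1}(p)$ is a single point, so $C_4(X,p)$ is one $2d$-dimensional real linear subspace $T$. Being a limit of complex tangent planes $T_{x_j}X$, and using that the image of $Gr_{\C}(d,n)$ is closed in $Gr_{\R}(2d,2n)$, the space $T$ is in fact a complex $d$-dimensional subspace. On the other hand $C_3(X,p)=C(X,p)$ is a pure $d$-dimensional complex cone contained in $T\cong\C^d$, and a $d$-dimensional complex analytic cone inside a $d$-dimensional complex linear space must equal it; hence $C_3(X,p)=T=C_4(X,p)$. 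Thus Corollary \ref{cor:c11_smooth_holder} yields that $X$ is a $C^{1,1}$ submanifold, and the theorem of Milnor invoked in the proof of Theorem \ref{thm:metric_nobile} (equivalently, the Lipschitz Regularity Theorem) then upgrades $C^{1,1}$ smoothness to analytic smoothness.

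The step I expect to be most delicate is the reliance on polynomial boundedness hidden inside Corollary \ref{cor:c11_smooth_holder}, since it is precisely the requirement ``$\alpha$-H\"older for all $\alpha\in(0,1)$'' that gets promoted to ``locally Lipschitz'' there; Example \ref{exam:holder_fails} shows this promotion genuinely fails for a general o-minimal structure, so the argument must exploit that $\R_{\mathrm{an}}$ is polynomially bounded. The only other point needing care is the bookkeeping of real versus complex dimensions when passing from $X\subset\C^n$ to $X\subset\R^{2n}$, together with the identification of the two Nash transforms, which is routine once the closed embedding of Grassmannians is recorded.
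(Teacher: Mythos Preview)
Your proposal is correct and follows the same route as the paper: pass to $\R_{\mathrm{an}}$, use polynomial boundedness to feed the H\"older-for-all-$\alpha$ hypothesis into Corollary \ref{cor:c11_smooth_holder} (equivalently, into Theorem \ref{thm:metric_nobile} via the observation preceding that corollary), and then upgrade $C^{1,1}$ to analytic smoothness by Milnor's result. You even spell out the verification that $C_3(X,p)=C_4(X,p)$ whenever the latter is a single plane---using that the tangent cone of a pure $d$-dimensional complex analytic set is a pure $d$-dimensional complex cone inside $C_4(X,p)\cong\C^d$---which the paper leaves implicit when it invokes Theorem \ref{c11_smooth} inside the proof of Theorem \ref{thm:metric_nobile}.
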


\subsection{Real version of the Theorem of Nobile}

\begin{theorem}\label{thm:real_analytic_ck_smooth}
Let $X\subset \R^n$ be a pure $d$-dimensional real analytic set, and let $k$ be a nonnegative integer number. Then $X$ is a $C^{k+1,1}$ submanifold if, and only if, the mapping $\eta\colon\mathcal{N}(X)\to X$ is a homeomorphism such that $\eta^{-1}$ is $C^{k,1}$ smooth.
\end{theorem}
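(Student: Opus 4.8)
The plan is to deduce this statement from the definable version already proved in Theorem \ref{c11_smooth}. A real analytic set is subanalytic, hence locally closed and locally definable in the o-minimal structure of globally subanalytic sets, so Theorem \ref{c11_smooth} applies to $X$ as soon as its extra hypothesis holds, namely that $C_3(X)$ and $C_4(X)$ coincide linearly. The forward implication is immediate and identical to the one in Theorem \ref{c11_smooth}: if $X$ is a $C^{k+1,1}$ submanifold, then the Gauss map $\nu$ is $C^{k,1}$, so $\mathcal{N}(X)$ is the $C^{k,1}$ graph of $\nu$ and $\eta^{-1}(x)=(x,\nu(x))$ is $C^{k,1}$. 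The whole content is therefore in the reverse direction, and reduces to verifying that real analyticity lets us \emph{remove} the hypothesis on $C_3$ and $C_4$.

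So I assume $\eta\colon\mathcal{N}(X)\to X$ is a homeomorphism with $\eta^{-1}$ of class $C^{k,1}$; in particular $\eta^{-1}$ is locally Lipschitz, which is the only regularity I will use here. Exactly as in Theorem \ref{c11_smooth}, bijectivity of $\eta$ forces $C_4(X,p)$ to be a single $d$-dimensional linear subspace $T=T_p$ at every $p\in X$, so the hypothesis ``$C_4(X,p)$ is a $d$-plane'' holds everywhere and it remains to show $C_3(X,p)=T$ for all $p$. I stress that this is precisely the point where real analyticity is indispensable and where the Lipschitz assumption cannot be weakened: for the cusp $\{x^{2}=y^{3}\}$ the map $\eta$ is already a homeomorphism and $C_4$ is a line, yet $C_3$ is only a half-line, so ``coincide linearly'' genuinely fails once $\eta^{-1}$ is merely H\"older.

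To prove $C_3(X,0)=T$ (take $p=0$, $T=\R^{d}\times\{0\}$), I argue by contradiction, assuming $C_3(X,0)\subsetneq T$, and exploit the mod $2$ multiplicity of Remark \ref{transversal-cones}. Let $\pi\colon\R^{n}\to T$ be the orthogonal projection; since $C_3(X,0)\subset T$ we have $\pi^{-1}(0)\cap C_3(X,0)=\{0\}$, so $\pi$ is transversal to the tangent cone and Remark \ref{transversal-cones} applies: $\#(\pi^{-1}(t)\cap U)\equiv m(X,0)\pmod 2$ for all generic small $t$, so the fibre parity is \emph{constant} across chambers. As $C_3(X,0)$ is a proper $d$-dimensional subcone of $T$, a standard tangent-cone estimate (using that all nearby tangent planes are within $O(\|x\|)$ of $T$) gives a subcone $\Gamma\subset T\setminus C_3(X,0)$ over which $\#(\pi^{-1}(t)\cap U)=0$; hence $m(X,0)$ is even, and by constancy of parity every generic fibre has even cardinality. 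Since $X$ projects onto the full $d$-dimensional cone $C_3(X,0)$, some chamber has generic fibre cardinality $N\geq 2$, and this is where the evenness is essential: it guarantees that the drop of the fibre count toward $\Gamma$ is by an even amount, producing a genuine two-sheet fold rather than a single sheet acquiring a (forbidden, since tangents stay near $T$) vertical tangent. Growing a maximal ball inside the top chamber as in the proof of Claim \ref{claim:Lip_implies_c1}, I obtain two regular Lipschitz sheets $f_i,f_j$ defined over a ball $\mathbf{B}^{d}(x_0,t)$ that agree at a boundary point $x^{*}$ near $0$.

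I then run the arc estimate of Claim \ref{claim:Lip_implies_c1} verbatim. Along the segment $\gamma$ from $x^{*}$ into the chamber, set $g=f_i\circ\gamma-f_j\circ\gamma$; the inequality \eqref{eq:tg_derivative} combined with the Lipschitz bound for $\eta^{-1}$ yields $\|g(t)\|\geq \tfrac{\tilde K}{M}\,\|g'(t)\|$, and passing to a nonzero definable component $g_{\ell}$ with $g_{\ell}(0)=0$ contradicts Lemma \ref{lem:estimate_derivative}, which forces $g_{\ell}(t)/g_{\ell}'(t)\to 0$. This contradiction gives $C_3(X,0)=T$; as $0$ was arbitrary, $C_3(X)$ and $C_4(X)$ coincide linearly, and Theorem \ref{c11_smooth} then delivers that $X$ is a $C^{k+1,1}$ submanifold. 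The main obstacle is exactly this reduction: converting the failure of $C_3=C_4$ into an honest pair of merging regular sheets to which the one-dimensional o-minimal estimate of Lemma \ref{lem:estimate_derivative} applies. It is here that the parity statement of Remark \ref{transversal-cones}, which is special to the real analytic (subanalytic) setting, does the decisive work of forcing an even, and hence genuinely two-sheeted, collapse of the fibre.
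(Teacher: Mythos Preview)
Your proposal is correct and follows essentially the same route as the paper: you reduce to Theorem~\ref{c11_smooth} by showing $C_3(X,p)=C_4(X,p)$, and you obtain this by combining the mod~$2$ multiplicity of Remark~\ref{transversal-cones} (to force the maximal sheet number $N\geq 2$) with the arc estimate and Lemma~\ref{lem:estimate_derivative}. The only cosmetic difference is where you place the merge: you rerun the growing-ball construction of Claim~\ref{claim:Lip_implies_c1} to produce a boundary point $x^{*}$ at which two sheets coincide, whereas the paper observes directly that \emph{all} $N$ sheets already meet at the origin (because $\pi^{-1}(0)\cap U=\{0\}$) and takes a definable $C^{1}$ arc $\gamma$ from $0$ into $S$; either way one lands on the same inequality $\|g(t)\|\geq \tfrac{\tilde K}{M}\|g'(t)\|$ and the same contradiction with Lemma~\ref{lem:estimate_derivative}.
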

\begin{proof}
It is clear that if $X$ is a $C^{k+1,1}$ submanifold, then $\eta\colon\mathcal{N}(X)\to X$ is a homeomorphism such that $\eta^{-1}$ is $C^{k,1}$ smooth.

Reciprocally, assume that $\eta\colon\mathcal{N}(X)\to X$ is a homeomorphism such that $\eta^{-1}$ is $C^{k,1}$ smooth.

Since $\eta\colon\mathcal{N}(X)\to X$ is in particular a bijection, we obtain that $C_4(X,p)$ is a $d$-dimensional linear subspace for all $p\in X$. In particular, $C_4(X,p)$ continuously varies on $p$.

We are going to prove that $C_3(X,p)= C_4(X,p)$ for all $p\in X$. Suppose by contradiction that this is not true, that is, suppose that there is $p\in X$ such that $C_3(X,p)\subsetneq C_4(X,p)$. Then $W:=int(C_4(X,p)\setminus C_3(X,p))$ is not an empty cone.

We may assume that $p=0$ and we choose linear coordinates $(x,y)$ in $\R^n$ such that $C_4(X,0)=\{(x,y)\in\R^n; y=0\}$. Let $\pi \colon \R^n\to C_4(X,0)\cong \R^d$ be the orthogonal projection.
Since $X$ is a real analytic set, the multiplicity $\mod 2$ is well-defined. Let $U$ be a sufficiently small neighbourhood of $0$ in $X$ such that there is a subanalytic set $\sigma\subset C_4(X,0)$ such that $\#( \pi^{-1}(v)\cap U)\, (\mod 2)$ is constant for all sufficiently small $v\in C_4(X,0)\setminus \sigma$.

For $v\in W$ and a sufficiently small neighbourhood $U$ of $0$ in $X$, we have that $\pi^{-1}(tv)\cap U=\emptyset$ for all sufficiently small $t>0$. This shows that the multiplicity of $X$ at the origin is $0\,  (\mod 2)$.
Since $C_4(X,q)$ continuously varies on $q$, by shrinking $U$, if necessary, we may assume that $\delta(C_4(X,0),C_4(X,q))<1/3$ for all $q\in U$.  In particular, for each $q\in U$, the restriction of $\pi$ to $C_4(X,q)$ is a linear isomorphism. Therefore, $\pi (U\setminus {\rm Sing}(X))$ is an open subset of $C_4(X,0)$. We may assume that $U=X\cap \mathbf B^{n}(0,r)$ for some $r>0$.

Let $B=\pi(U)$, $B'=B\setminus \pi({\rm Sing}(X))$, $U'=\pi^{-1}(B')\cap U$, $N=\sup\limits_{x\in B'} \#\pi|_{U'} ^{-1}(x)$ and $S= \{ x\in B': \#\pi|_{U} ^{-1}(x)= N\}$.  By shrinking $r$, if necessary, we assume that $0\in \overline{S}$ and $\pi^{-1}(0)\cap U=\{0\}$. We have that $S$ is an open set, $\pi_U^{-1}(S)$ has exactly $N$ connected components, say $X_1,...., X_N$, and for each $i$, $X_i$ is the graph of a $C^1$ mapping $f_i\colon S\to \R^{n-d}$. By the assumptions on the tangent cones, we also have that each $f_i$ has bounded derivative. Let $\lambda\geq 1$ be a number such that $\|Df_i\|\leq \lambda$ in $S$ for all $i\in \{1,...,N\}$.

Since $0\in \overline{S}$, there is a definable $C^1$ smooth arc $\gamma\colon [0,\epsilon)\to \overline {S}$ such that $\gamma(0)=x$, $\gamma((0,\epsilon))\subset S$ and $\|\gamma(t)\|= t$ for all $t\in [0,\epsilon)$. 
Let $\tilde \gamma_i(t)=(\gamma(t),f_i(\gamma(t)))$ and $\tilde \gamma_j(t)=(\gamma(t),f_j(\gamma(t)))$. 

Since $m(X,0)=0\,(\mod 2)$, we have $N\geq 2$ and, in particular, $\pi|_{U}$ is not injective. 

By proceeding as in the proof of Theorem \ref{c11_smooth}, there is a constant $\tilde K$ such that $\delta(T_{(z,f_i(z))}X_i,T_{(z,f_j(z))}X_j)\geq \tilde K\|(Df_i)_z-(Df_j)_z\|$ for all $z\in S$.
Then, 
$\delta(T_{\gamma_i(t)}X_i,T_{\gamma_j(t)}X_j)\geq \tilde K\|(f_i\circ \gamma)'(t)-(f_j\circ \gamma)'(t)\|$ for all $t\in [0,\epsilon)$.

Since $\eta$ is, in particular, a local bi-Lipschitz homeomorphism, by proceeding in the same way as in the proof of Theorem \ref{c11_smooth}, we obtain
\begin{eqnarray*}
{\rm ord}_0 \|\gamma_i(t)-\gamma_j(t)\|&\leq & {\rm ord}_0 \|\gamma_i(t)-\gamma_j(t)\| - 1,
\end{eqnarray*}
which is a contradiction.
Therefore, $C_3(X,p)=C_4(X,p)$ for all $p\in X$. In particular, $C_3(X)$ and $C_4(X)$ coincide linearly. By Theorem \ref{c11_smooth}, $X$ is a $C^{k+1,1}$ submanifold. 
\end{proof}

Consequently, we obtain the following real version of the Theorem of Nobile.

\begin{theorem}\label{thm:real_analytic_smooth}
Let $X\subset \R^n$ be a pure $d$-dimensional real analytic set. Then the following statements are equivalent:
\begin{enumerate}
 \item [(1)] $X$ is a real analytic submanifold;
 \item [(2)] the mapping $\eta\colon\mathcal{N}(X)\to X$ is a real analytic diffeomorphism;
 \item [(3)] the mapping $\eta\colon\mathcal{N}(X)\to X$ is a $C^{\infty}$ diffeomorphism;
 \item [(4)] $X$ is a $C^{\infty}$ submanifold.
\end{enumerate}
\end{theorem}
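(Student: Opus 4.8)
The plan is to prove the cycle of implications $(1)\Rightarrow(2)\Rightarrow(3)\Rightarrow(4)\Rightarrow(1)$, so that the only genuinely new content beyond the theorems already established is the passage from $C^{\infty}$ smoothness to real analyticity.

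For $(1)\Rightarrow(2)$ I would argue that if $X$ is a real analytic submanifold, then its Gauss map $\nu\colon X\to Gr_{\R}(d,n)$, $x\mapsto T_xX$, is real analytic; since ${\rm Sing}(X)=\emptyset$ one has $\mathcal{N}(X)={\rm graph}(\nu)$, which is a real analytic submanifold, and $\eta$ is the inverse of the real analytic embedding $x\mapsto(x,\nu(x))$, hence a real analytic diffeomorphism. The implication $(2)\Rightarrow(3)$ is immediate, since a real analytic diffeomorphism is in particular a $C^{\infty}$ diffeomorphism. For $(3)\Rightarrow(4)$, if $\eta$ is a $C^{\infty}$ diffeomorphism then $\eta^{-1}$ is $C^{k,1}$ smooth for every nonnegative integer $k$, so Theorem~\ref{thm:real_analytic_ck_smooth} gives that $X$ is a $C^{k+1,1}$ submanifold for every $k$, and therefore a $C^{\infty}$ submanifold.

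The heart of the matter is $(4)\Rightarrow(1)$: a pure $d$-dimensional real analytic set that is a $C^{\infty}$ submanifold must be a real analytic submanifold. I would reduce this to showing that the multiplicity satisfies $m(X,p)=1$ for every $p\in X$. Indeed, $m(X,p)=m(X_{\C},p)$, and $m(X_{\C},p)=1$ forces the complexification $X_{\C}$ to be smooth at $p$; choosing real analytic functions whose complexifications define $X_{\C}$ near $p$ with independent gradients, their real restrictions cut out $X$ near $p$ with independent gradients, so by the real analytic implicit function theorem $X$ is a real analytic submanifold near $p$.

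To prove $m(X,p)=1$ I would first cut down to curves: for a generic affine subspace $H\ni p$ of dimension $n-d+1$ transverse to $X$, the slice $X\cap H$ is a real analytic curve which is again $C^{\infty}$, and $m(X,p)=m(X\cap H,p)$, so it suffices to treat $d=1$. After choosing coordinates with $T_pX=\R\times\{0\}$, the curve $X$ is the $C^{\infty}$ graph of a map $h$ with $h(0)=0$ and $h'(0)=0$; analysing the branches of $X_{\C}$ through $p$ by Newton--Puiseux, the single real branch $X$ is governed by a rational exponent $a/b$ with $b=m(X_{\C},p)$, and a direct computation shows that a graph $y=x^{a/b}$ is $C^{k,1}$ only for $k\le a/b-1$. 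Since $C^{\infty}$ smoothness forces $C^{k,1}$ regularity for all $k$, the exponents must be integers, the tangent cone of $X_{\C}$ is a reduced $d$-plane, and $m(X_{\C},p)=1$. I expect this last step --- quantifying how the Puiseux/multiplicity data of $X_{\C}$ obstructs finite-order differentiability of the real graph, and verifying that it is precisely $C^{\infty}$ smoothness (not merely some finite $C^{k,1}$) that rules out all fractional exponents --- to be the main obstacle, exactly because finite regularity genuinely fails to suffice, as the sets $\{y^{3}=x^{3k+1}\}$, which are $C^{k}$ but have multiplicity $3$, illustrate.
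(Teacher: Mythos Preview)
Your cycle $(1)\Rightarrow(2)\Rightarrow(3)\Rightarrow(4)\Rightarrow(1)$ and your handling of the first three arrows coincide with the paper's proof. The one difference is $(4)\Rightarrow(1)$: the paper disposes of it in a single line by citing \cite[Proposition~1.1]{Ephraim:1973}, which is exactly the statement that a real analytic set which is a $C^{\infty}$ submanifold is already real analytically smooth.

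Your direct route to $(4)\Rightarrow(1)$ via multiplicity and Puiseux expansions is a legitimate alternative and amounts to reproving Ephraim's result in this situation. Two points need more care than your sketch provides. First, the slicing step requires $(X\cap H)_{\C}=X_{\C}\cap H_{\C}$ as germs for generic real $H$, so that $m(X,p)=m(X\cap H,p)$; this is standard but not automatic. Second, in the curve case the Puiseux data is not a single exponent $a/b$: writing $x=t^{m}$, $y=\sum_{j}c_{j}t^{n_{j}}$ with $\gcd(m,n_{1},n_{2},\dots)=1$, the argument is that if $m>1$ then some $n_{j}$ is not divisible by $m$, and the smallest resulting fractional exponent $n_{j}/m$ bounds the differentiability class of the real graph, contradicting $C^{\infty}$. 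One must also check that the real branch is actually parametrised by real $t$ in this chart (the parity of $m$ matters) and, before that, that $X_{\C}$ is irreducible so that a single Puiseux series applies. None of this is a genuine obstruction, so your plan is sound; it simply trades the brevity of a citation for self-containment.
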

\begin{proof}
It is clear that $(1)\Rightarrow (2)$ and $(2)\Rightarrow (3)$.

Let us prove $(3)\Rightarrow (4)$. 
So, assume that $\eta\colon\mathcal{N}(X)\to X$ is a $C^{\infty}$ diffeomorphism. In particular, $\eta^{-1}$ is $C^{k,1}$ smooth for any nonnegative integer $k$. By Theorem \ref{thm:real_analytic_ck_smooth} and the proof of Theorem \ref{c11_smooth}, $X$ is locally the graph of a mapping $h\colon B\to \R^{n-d}$ that is $C^{k+1,1}$ smooth for all nonnegative integer $k$. So, $h$ is $C^{\infty}$ smooth and, therefore, $X$ is a $C^{\infty}$ submanifold.

The implication $(4)\Rightarrow (1)$ follows from \cite[Proposition 1.1]{Ephraim:1973}.
\end{proof}

Since any real analytic set is locally definable in a polynomially bounded o-minimal structure on $\R$, by Corollary \ref{cor:c11_smooth_holder} and Theorem \ref{thm:real_analytic_ck_smooth}, we obtain the following.
\begin{corollary}\label{cor:real_analytic_ck_smooth_hoelder}
Let $X\subset \R^n$ be a pure $d$-dimensional real analytic set, and let $k$ be a nonnegative integer number. Then $X$ is a $C^{k+1,1}$ submanifold if, and only if, the mapping $\eta\colon\mathcal{N}(X)\to X$ is a homeomorphism such that $\eta^{-1}$ is $C^{k,\alpha}$ smooth for all $\alpha\in (0,1)$.
\end{corollary}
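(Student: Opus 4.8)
The plan is to deduce this corollary directly from Theorem \ref{thm:real_analytic_ck_smooth}, using the polynomial-boundedness upgrade already isolated in the remark preceding Corollary \ref{cor:c11_smooth_holder}. The idea is to reduce the H\"older hypothesis (``$\eta^{-1}$ is $C^{k,\alpha}$ smooth for all $\alpha\in(0,1)$'') to the single Lipschitz hypothesis (``$\eta^{-1}$ is $C^{k,1}$ smooth''), after which Theorem \ref{thm:real_analytic_ck_smooth} applies with no further work.

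First I would dispose of the forward implication. If $X$ is a $C^{k+1,1}$ submanifold, then by Theorem \ref{thm:real_analytic_ck_smooth} the map $\eta$ is a homeomorphism with $\eta^{-1}$ of class $C^{k,1}$; since the Lipschitz modulus of continuity of the $k$-th derivative dominates every H\"older modulus on bounded sets, $\eta^{-1}$ is then locally $C^{k,\alpha}$ smooth for every $\alpha\in(0,1)$. This direction is therefore immediate.

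For the converse, suppose that $\eta\colon\mathcal{N}(X)\to X$ is a homeomorphism such that $\eta^{-1}$ is $C^{k,\alpha}$ smooth for every $\alpha\in(0,1)$. The decisive observation is that a real analytic set is locally closed, pure $d$-dimensional, and locally definable in a polynomially bounded o-minimal structure on $\R$. As recorded in the remark preceding Corollary \ref{cor:c11_smooth_holder}, this forces $\eta^{-1}$ to be locally definable in a polynomially bounded structure, and for such a definable map the hypothesis of being $C^{k,\alpha}$ for all $\alpha\in(0,1)$ automatically upgrades to being $C^{k,1}$. Hence $\eta^{-1}$ is $C^{k,1}$ smooth, and Theorem \ref{thm:real_analytic_ck_smooth} yields at once that $X$ is a $C^{k+1,1}$ submanifold.

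I expect no genuine obstacle in this argument: every tool is already in place, and the only point requiring care is to check that the definability and polynomial-boundedness hypotheses of the earlier results are genuinely satisfied by $\eta^{-1}$. That verification is precisely the content of the remark cited above, so the proof is in essence a formal combination of Theorem \ref{thm:real_analytic_ck_smooth} with the definability of real analytic sets in a polynomially bounded structure.
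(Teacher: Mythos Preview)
Your argument is correct and is essentially the paper's own proof: the paper likewise observes that a real analytic set is locally definable in a polynomially bounded o-minimal structure, invokes the upgrade from $C^{k,\alpha}$-for-all-$\alpha$ to $C^{k,1}$ recorded just before Corollary~\ref{cor:c11_smooth_holder}, and then appeals to Theorem~\ref{thm:real_analytic_ck_smooth}. There is nothing to add.
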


Note that for $k=0$, we cannot obtain an analogous result for $C^{0,\alpha}$ smoothness for any $\alpha\in(0,1)$.

\begin{example}
For each $k$, let $X_k=\{(x,y)\in \R^2; y^2=x^{2k+1}\}$. Note that $X_k$ is not $C^1$ smooth around the origin, and for $(x,y)\in X_k\setminus \{(0,0)\}$, we have that $\{(a,b)\in \R^2; \frac{(2k+1)}{2}y^{\frac{2k-1}{2k+1}}a+b=0\}$ is the tangent line to $X_k$ at $(x,y)$. Then the mapping $\eta\colon\mathcal{N}(X_k)\to X_k$ is a homeomorphism such that $\eta^{-1}$ is $C^{0,\alpha_k}$ smooth, where $\alpha_k=\frac{2k-1}{2k+1}$.
\end{example}

\end{document}